\newtheorem{theorem}{Theorem}[section]
\newtheorem{proposition}{Proposition}[section]
\newcommand\bX{\mathbf X}
\newcommand\bW{\mathbf W}
\newcommand\bm{\mathbf m}
\newcommand\cP{\mathcal P}
\def\balpha{\boldsymbol{\alpha}}
\def\RR{\mathbb R}
\title{Leveraging the turnpike effect for Mean Field Games numerics}
\author{René Carmona, Claire Zeng}
\address[A1,A2]{Princeton University, Princeton, USA}
\email[A1,A2]{rcarmona@princeton.edu, cszeng@princeton.edu}
\date{\today}
\begin{document}

\maketitle

\begin{abstract}
    Recently, a deep-learning algorithm referred to as Deep Galerkin Method (DGM), has gained a lot of attention among those trying to solve numerically Mean Field Games with finite horizon, even if the performance seems to be decreasing significantly with increasing horizon. On the other hand, it has been proven that some specific classes of Mean Field Games enjoy some form of the turnpike property identified over seven decades ago by economists. The gist of this phenomenon is a proof that the solution of an optimal control problem over a long time interval spends most of its time near the stationary solution of the ergodic solution of the corresponding infinite horizon optimization problem. After reviewing the implementation of DGM for finite horizon Mean Field Games, we introduce a ``turnpike-accelerated'' version that  incorporates the turnpike estimates in the loss function to be optimized, and we perform a comparative numerical analysis to show the advantages of this accelerated version over the baseline DGM algorithm. We demonstrate on some of the Mean Field Game models with local-couplings known to have the turnpike property, as well as a  new class of linear-quadratic models for which we derive explicit turnpike estimates.
\end{abstract}

\vskip 6pt\noindent
\textbf{Keywords} : Turnpike Property, Mean Field Game, Exponential Convergence, Deep Galerkin Method

\section{\textbf{Introduction and Relevant Literature} }

In stochastic control, and more recently in dynamic game theory, the turnpike phenomenon refers to a modern macro-economics principle, that was first identified by Dorfman, Samuelson, and Solow in \cite{dorfman_linear_1987}. It states that in some optimal control problems  in large time, the optimal trajectory between two points that are far apart can often be split into three segments: 
\begin{itemize}
    \item a first short-time arc departing from the point of origin or initial condition to a steady-state, 
    \item a second long-time arc that stays close to the steady-state for most of the time (this is the so-called turnpike), 
    \item a third short-time arc that leaves the turnpike at some time that is close to the time horizon and manages to satisfy the terminal condition. 
\end{itemize}
When it holds, the turnpike property is useful because it describes qualitatively the long time behavior of the finite horizon solution, by leveraging information from the solution of a (usually simpler) stationary problem without having to analytically solve the original problem. This phenomenon has been identified in one form or another, in many macro-economic models, and more recently, it appeared in the optimal control literature and even more recently in the mean field game literature. For deterministic linear optimal control problems, \cite{trelat_steady-state_2018} shows that the control and the state trajectories remain exponentially close to the solutions of the steady-state version of the problem in finite dimension. Extensions are treated for non-linear problems in \cite{trelat_turnpike_2015} under some appropriate controllability and smallness assumptions, and in \cite{porretta_turnpike_2018} for the linear-quadratic case in finite dimension under the Kalman controllability rank condition. 

The long time behavior of mean field games (MFG) has been studied in several settings.  This behavior has been identified in \cite{ferreira_convergence_2014} and \cite{gomes_discrete_2010} for finite state mean field games, and in \cite{porretta_long_2012} and \cite{porretta_long_2013} for the continuous case. More specifically, the authors of these papers compare the solution of a finite horizon MFG to its stationary analog, as initially developed by Lasry-Lions in \cite{lasry_jeux_2006_a} and \cite{lasry_jeux_2006_b}. The long time behavior of the value function has been investigated on the torus in the case of local couplings for a quadratic Hamiltonian of the form $H(x,p) = \frac{1}{2} \lvert p \rvert^2$ in \cite{porretta_long_2012,porretta_long_2013,cardialaguet-nonlocal}, and later on, for more general Hamiltonians \cite{porretta_turnpike_2018}. Under a strong monotonicity assumption on the running cost coupling, the turnpike estimates hold with an exponential rate. The cases of local and non-local couplings have been addressed respectively in \cite{cardialaguet-nonlocal}     
and \cite{porretta_long_2012}. Under the additional assumption of smoothing properties of the coupling functions, an exponential rate of convergence can be proven. A more precise analysis of the type of convergence and relationship between the ergodic and finite horizon problems has been performed in \cite{cardaliaguet_long_2019} through the study of the long time behavior of the master equation. The paper offers a description of the relationship at the limit of the time horizon $T \rightarrow \infty$ between the finite time value function, the ergodic value function, the ergodic constant and time. In fact, the monotonicity assumption can be somewhat relaxed as demonstrated in \cite{cirant_long_2021}, where the exponential convergence is still ensured by the presence of volatility, which compensates for the lack of strict monotonicity. The previous results are of a global nature, local results with exponential rates are also available provided that the value function and the mean field are close enough to the stationary solution. Illustrations of the manifestation of the global turnpike property for mean field games are available for a congestion model in \cite{achdou_mean_2020} and an optimal execution problem in \cite{OE_anon_id}. A local turnpike property has been shown for a Kuramoto mean field game in \cite{Kuramoto}. 

The present paper is concerned with numerical methods to solve mean field games. In the existing literature, the first numerical schemes consisted in the search of a fixed point by iteration of the solutions of discretized versions of the Hamilton-Jacobi-Bellman (HJB) and the Kolmogorov-Focker-Planck (KFP)  equations : finite-difference algorithms are one example. Fully-implicit monotone versions of the finite difference scheme enabled considering long-time horizons at the price of using too many computing resources when the state space dimension $d \geq 4$.  More recently, deep learning methods have been developed to solve such problems. The Deep Galerkin Method (DGM) can be used to solve the MFG Partial Differential Equation (PDE) system by parameterizing the value function $u$ and the probability density $m$ as two neural networks that are then trained by Stochastic Gradient Descent (SGD) to minimize a loss function involving the residuals of the PDEs, the initial and terminal conditions, as well as eventual boundary or periodicity constraints.  An advantage of the DGM algorithm is that it is \textit{mesh-free} by the random sampling of time and space points to form batches : this therefore enables the method to be scalable for systems of high dimension.  Another alternative deep learning algorithm focuses on the MFG Forward Backward Stochastic Differential Equations (FBSDE) associated and uses a shooting trick \textit{à la Sannikov} by replacing the terminal condition of the Backward Stochastic Differential Equation (BSDE) initial condition to be determined by SGD. 

\vskip 2pt
The turnpike property has been touted as a potential tool to improve existing numerical computations for deterministic control problems. For example, \cite{trelat_steady-state_2018} suggests the following scheme that relies on the intuitive idea that at time $t = T/2$, the solution is close to the steady-state solution, and the complete path can be recovered by a double-sided shooting method on the system derived from the Pontryagin Maximum Principle: 
\begin{itemize}
    \item starting at time $T/2$ from the steady state solution, reverse the direction of time in the dynamics of the state, and solve the problem by a shooting algorithm $0 \leftarrow T/2$  to match the initial condition; 
    \item similarly following the natural flow of time, solve the problem from $T/2$ to the time horizon $T$ and use again a shooting method $T/2 \rightarrow T$ to match the terminal condition.
\end{itemize}
However, there is no straightforward generalization of this idea to Mean Field Games of second order. 
The main obstacle is the fact that the Pontryagin forward-backward system is now stochastic, deriving time-reversed dynamics is typically difficult, and when compounded with a shooting method, the results can be very poor, especially over long time intervals.
At best, the Mean Field Games turnpike estimates give us an information about how close the state statistical distribution can be to the ergodic one, and possibly the proximity of the derivative of the value function $Du(t,x)$ to the ergodic derivative $D \overline{u}$. Here $Du(t,x)$ is the decoupling field of the FBSDE system derived from the Pontryagin Stochastic Maximum Principle. However, trying to time-reverse an SDE (corresponding to the dynamics of the BSDE) to generalize the above scheme is no easy task because of the forward direction of flow of information and the fact that solutions to stochastic equations need to remain non-anticipative. The first result about time reversion of diffusion equation models is due to \cite{ANDERSON1982313}: constructing the reverse-time representation of a solution of an SDE requires the knowledge of the time-dependent score function $\nabla_x \log p_t$ that describes the evolution of the probability density $p(X_t, t | X_s, s)$ of the state process $X = (X_t)_{t >0}$. Estimating the score function of a generic SDE can be achieved through training a score-based model on samples with score matching as in \cite{song2021scorebased}. To the best of our knowledge, there are no known result for stochastic control problems, where the forward SDE dynamics are \textit{a priori} unknown until the control is determined.  

The closest known application of the turnpike idea for Mean Field Games is suggested in \cite{achdou_mean_2020}: since the Newton algorithm used in the finite difference method requires a good initial condition, sometimes it is useful to use the stationary solution of the ergodic problem as an initializer of this step. However, this warm-start is suggested not only for problems with a turnpike property but for any problem provided that a stationary solution is relatively easy to compute. More precisely, this trick enables to have a good initial guess for the distribution variable (which is crucial to guarantee the positivity of the distribution approximations in the next iterations). Other alternatives are to either simulate the problem on a coarser grid and use it as the initial guess or to use a continuation method with respect to the viscosity parameter by decreasing it to the desired value. 

\vskip 2pt
In this work, we focus on the numerical solutions of MFGs over long time horizons when the shooting methods do not perform well, and for second order MFGs for which time reversal of stochastic dynamics is an issue. As demonstrated in the recent literature on the subject, we rely on machine learning algorithms to compute numerical approximations of the solutions. To be specific, we work with
the Deep Galerkin Method (DGM) algorithm which has gained a lot of interest thanks to its flexibility and generality. However, to resolve the weaknesses of the method for long time horizons, we propose an improved version of the algorithm based on the turnpike property. This is achieved by incorporating forms of the turnpike estimates into the loss function without any modification of the Stochastic Gradient Descent (SGD) update rules. To demonstrate the positive impact of this modified objective function in the SGD, we  perform a comparison study between the baseline DGM algorithm and our \textit{turnpike-accelerated} version. We  focus on models for which we have explicit exponential rates of convergence to objectively assess the nature of the improvements. To further the case for the improved algorithm, we introduce a new class of linear quadratic Mean Field Games for which we derive global explicit exponential turnpike estimates. To the best of our knowledge, it is the first of its kind.  Indeed, we are only aware of \cite{ARAPOSTATHIS2017205} for the long time behavior of MFGs set on unbounded domains under the assumption of geometric ergodicity for the dynamics and where no rate of convergence is derived.  Comparative results from the implementations of the two DGM algorithms document the merits of the turnpike-accelerated computations.

The rest of the paper is organized as follows. Section \ref{section:2} reviews a class of Mean Field Game models  with local coupling interactions, and the turnpike estimates available for them in the literature. We also identify a specific model which can be solved explicitly and we illustrate graphically the potential impact of the turnpike property on possible numerics. Section \ref{section:3} describes the two Deep Galerkin Methods (DGM) that are being compared in this work. In particular, we detail the form of the accelerated DGM which we propose to leverage the turnpike property. Section \ref{section:4} introduces a new class of explicitly solvable models for which we can derive explicit turnpike estimates and test numerically the performances of both versions of the DGM algorithm introduced in Section \ref{section:3}. Finally, Section \ref{section:5} concludes.

\section{\textbf{Mean Field Games PDE Systems and Turnpike Estimates} }
\label{section:2}

This section first introduces the finite horizon and ergodic mean field game models that are considered in the rest of this paper. We refer the reader to the original papers \cite{lasry_jeux_2006_a} and \cite{lasry_jeux_2006_b} for more details. We then present a brief overview of the turnpike estimates linking those models, and focus on the typical case of local couplings as studied in \cite{porretta_long_2012}.

\vskip 2pt
For a given dimension $d \in \mathbb{N}^* $, we denote the $d$-dimensional flat torus  by $\mathbb{T}^d = \mathbb{R}^d \slash \mathbb{Z}^d $. This will be the state space of our stochastic systems. We denote the unit cube by $Q = [0,1]^d$. 

\subsection{Finite Horizon Mean Field Game}

The most common setup of finite horizon Mean Field Games is given by the following system of Partial Differential Equations (PDE): 
 
 \begin{subequations}
    \begin{empheq}[left={(MFG-T):= \empheqlbrace}]{align}
& - \partial_t u - \kappa \Delta u + H(x, Du) =  F(x, m) &\text{ in } (0, T) \times \mathbb{T}^d, \\
& \partial_t m - \kappa \Delta m - \operatorname{div}(m H_p(x, Du)) = 0 &\text{ in } (0, T) \times \mathbb{T}^d,  \\
& m(0, \cdot) = m_0 , \, u(T,x) = G(x, m(T)). 
    \end{empheq}
    \label{eq:mfg_t}
    \end{subequations}
with additional periodic boundary conditions since we are working on the torus. 

The heuristic interpretation of this PDE system is associated with a representative agent that controls the time evolution of their state $\bX = (X_t)_{0\le t\le T}$ in $\mathbb{R}^d$ through a control process $\balpha = (\alpha_t)_{0\le t\le T}$. For the purpose of this paper, we work with controls in feedback form, so $\alpha_t=\alpha_t(X_t)$ for some deterministic (feedback) function $x\mapsto \alpha_t(x)\in\RR^d$. The state of the agent is typically governed by the following SDE: 
\begin{equation}
    dX_t = \alpha_t(X_t)  dt + \sigma dW_t,\qquad  X_0 = \xi \sim m_0,
\end{equation}
where $(W_t)_{t\ge 0}$ is a standard $d$-dimensional Brownian motion. For a fixed probability flow $\bm=(m_t)_{0\le t\le T}$ that represents the distribution of the states of the other players, the representative agent aims at minimizing an objective function, expressed as: 

\begin{equation*}
    J^T(\balpha | \bm) = \mathbb{E} \Big[ \int_0^T \Bigl(\frac{1}{2} \lvert \alpha_s \rvert^2 + F(X_s, m_s)\Bigr) ds + G(X_T, m_T)\Big],
\end{equation*}
where the total cost consists of a running cost that is accumulated over time and a terminal cost that penalizes the state in which the agent lands at the end of the time horizon $T>0$. Different existence and uniqueness results are available for this PDE system depending on the nature of the coupling, whether they are smoothing or local couplings. Typically, the coupling is provided by an instantaneous cost function $F$ from $\RR^d\times\cP(\RR^d)$ into $\RR$ where $\cP(\RR^d)$ denotes the space of probability measures on $\RR^d$.
We say that the coupling is local when $F$ is of the form $F(x,m)=\tilde F(x,m(x))$ for a function $\tilde F$ (which we shall still denote by $F$) from $\RR^d\times\RR$ into $\RR$, the probability measure $m$ being assumed to have a density whose value at $x$ we denote by $m(x)$ by a mild abuse of notation.

In any case, because of the special form of the state dynamics and of the running cost, $H(x,p) = \frac{1}{2} \lvert p \rvert^2$ and we expect that the optimal control will be given in feedback form by $\alpha^*(t,x) = - Du^T(t,x) $. 

\subsection{Ergodic Mean Field Game}
 The case of ergodic cost function offers an infinite horizon model (i.e. $T=\infty$) for which the players are infinitely patient. Given the form of the cost given below in \eqref{eq:ergodic_cost}, the solution of the ergodic problem is independent of the behavior of the players and the state of the system at any finite time, and relies on a value function $u_t=\bar u$, marginal distributions $m_t=\bar m$ and an optimal feedback control function $\alpha_t=\bar\alpha$ which are independent of time and solve the PDE system:

 \begin{subequations}
    \begin{empheq}[left={(MFG-e):= \empheqlbrace}]{align}
& \lambda - \kappa \Delta \overline{u}+ H(x, D\overline{u}) =  F(x, \overline{m}) &\text{ in }   \mathbb{T}^d, \\
& - \kappa \Delta \overline{m} - \operatorname{div}(\overline{m} H_p(x, D\overline{u})) = 0 &\text{ in }  \mathbb{T}^d,  \\
& m \geq 0, \int_{\mathbb{T}^d} \overline{m}(x)dx = 1, \int_{\mathbb{T}^d} \overline{u}(x) dx = 0. 
    \end{empheq}
    \label{eq:mfg_ergodic}
    \end{subequations}    
If the coupling $F(x, \cdot)$ is monotone in the sense of \cite{lasry_jeux_2006_a}, then the ergodic problem is well-posed; that is, there exists a unique couple $(\overline{u},\overline{m})$ and a unique constant $\overline{\lambda}$ which solve the above system. Moreover, $\overline{u}, \overline{m}$ are smooth and $\overline{m}>0$. The heuristic interpretation of the ergodic MFG PDE system is similar to the finite horizon setting, except for the fact that the cost function now reads: 
\begin{equation}
    \label{eq:ergodic_cost}
  \underset{T \rightarrow +\infty}{\lim \inf} \,  \frac{1}{T}  \mathbb{E} \Big[ \int_0^T \frac{1}{2} \lvert \bar\alpha(X_s) \rvert^2 + F(X_s, \bar m) ds \Big].
\end{equation}

The convergence of the solution of the finite horizon MFG towards the stationary ergodic system when $T\to\infty$ was analyzed in several settings. \cite{porretta_long_2012} and \cite{cardialaguet-nonlocal} study this convergence when the Hamiltonian is purely quadratic $H(x,p) = \frac{1}{2} \lvert p \rvert^2$. \cite{porretta_turnpike_2018} establishes a complete description of the long time convergence in the case of non-local smoothing couplings and a uniformly convex Hamiltonian by analyzing the master equation, while \cite{porretta_long_2012} focuses on the case of local couplings and a globally Lipschitz Hamiltonian. This is the setting we will focus on in this paper. For convenience, we provide an overview of the turnpike estimates of \cite{porretta_long_2012} in the next subsection. 

\subsection{Reference Case: $H(x, p) = \frac{1}{2} \lvert p\rvert^2 $ with smooth initial data $m_0, u_T$, $m_0 > 0$} 

The turnpike property for Mean Field Games can take different forms, but they all capture the idea that the solutions become nearly stationary around the ergodic solution for a long time. Weak versions of the turnpike property involve a time-average convergence, while stronger versions provide exponential rates of convergence. 

\vskip 1 \baselineskip

\paragraph{\textbf{Assumptions}}

We assume that $F : \mathbb{R}^d \times \mathbb{R} \ni (x,m)\mapsto F(x,m)\in  \mathbb{R}$ is a $\mathcal{C}^1$ function which is $\mathbb{Z}^d$-periodic in $x$ and increasing in $m$. We also assume that $m_0 : \mathbb{R}^N \rightarrow \mathbb{R}$ is Lipschitz continuous, $\mathbb{Z}^d$-periodic with $m_0 > 0$ and $\int_Q m_0 = 1$, and that $G : \mathbb{R}^d \rightarrow \mathbb{R}$ is  $\mathbb{Z}^d$-periodic and $\mathcal{C}^2$. 

\begin{proposition}[Proposition 1 in \cite{porretta_long_2012} ]
Let the above assumptions hold. 
\begin{enumerate}[label = (\roman*)]
    \item There is a unique classical solution $(u^T, m^T)$ to \eqref{eq:mfg_t}. 
    \item There is a unique classical solution $(\lambda, \overline{u}, \overline{m})$ to \eqref{eq:mfg_ergodic}. Moreover, $\overline{m} =e^{-\overline{u}}/ \int_{Q} e^{-\overline{u}}$.
\end{enumerate}
\end{proposition}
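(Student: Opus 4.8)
The plan is to handle the two systems in turn, in each case getting existence from a Schauder fixed-point argument and uniqueness from the Lasry--Lions monotonicity (duality) computation, and then reading off the explicit ergodic density by direct verification. The whole argument leans on the quadratic structure $H(x,p)=\tfrac12|p|^2$: it makes $H_p(x,p)=p$, so the Kolmogorov--Fokker--Planck (KFP) equation is a genuine linear transport-diffusion equation once $Du$ is known, and it linearizes the Hamilton--Jacobi--Bellman (HJB) equation through the Hopf--Cole change of variables. For existence in \eqref{eq:mfg_t} I would fix a density flow $m$ in a convex set $\mathcal{K}\subset C([0,T];\mathcal{P}(\mathbb{T}^d))$ carrying a uniform $L^\infty$/H\"older bound, and define a map $\Phi$ as follows. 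Given $m$, solve the backward HJB $-\partial_t u-\kappa\Delta u+\tfrac12|Du|^2=F(x,m)$, $u(T,\cdot)=G$; the substitution $\phi=e^{-u/(2\kappa)}$ turns this into the linear backward heat equation $\partial_t\phi+\kappa\Delta\phi=\tfrac{1}{2\kappa}F(x,m)\,\phi$ with terminal datum $e^{-G/(2\kappa)}$, for which linear parabolic theory provides a unique smooth, strictly positive $\phi$, hence a classical $u$ with bounds on $u$ and $Du$ depending only on $\|F(\cdot,m)\|_\infty$ and $\|G\|_{C^2}$. Feeding the H\"older-continuous drift $-Du$ into the forward equation $\partial_t\tilde m-\kappa\Delta\tilde m-\operatorname{div}(\tilde m\,Du)=0$, $\tilde m(0,\cdot)=m_0$, yields a unique classical solution that is nonnegative by the maximum principle, mass-preserving, and in fact strictly positive by the strong maximum principle since $m_0>0$. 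A fixed point of $\Phi(m)=\tilde m$ is a solution of \eqref{eq:mfg_t}, and Schauder's theorem applies once $\Phi$ is shown to map $\mathcal{K}$ continuously into a relatively compact subset of itself.

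For uniqueness of \eqref{eq:mfg_t} I would run the Lasry--Lions duality argument on two solutions $(u_1,m_1)$, $(u_2,m_2)$: differentiate $\int_{\mathbb{T}^d}(u_1-u_2)(m_1-m_2)\,dx$ in time, substitute the two HJB and two KFP equations, and integrate by parts in space (the second-order terms cancel). Because $m_1(0)=m_2(0)=m_0$ and $u_1(T)=u_2(T)=G$, the time-boundary terms vanish, and integrating over $(0,T)$ leaves the identity
\[
\int_0^T\!\!\int_{\mathbb{T}^d}\bigl(F(x,m_1)-F(x,m_2)\bigr)(m_1-m_2)\,dx\,dt
+\sum_{i=1}^{2}\int_0^T\!\!\int_{\mathbb{T}^d} m_i\,\Theta_i\,dx\,dt=0,
\]
where, for $\{i,j\}=\{1,2\}$, $\Theta_i=H(x,Du_j)-H(x,Du_i)-H_p(x,Du_i)\cdot(Du_j-Du_i)\ge0$ by convexity of $H$ and $m_i\ge0$. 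Both groups being nonnegative, they must vanish: monotonicity of $F$ in $m$ forces $m_1=m_2$, while strict convexity of $H$ together with $m_i>0$ forces $Du_1=Du_2$, and then $u_1=u_2$ by uniqueness for the HJB equation with identical data.

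The ergodic system \eqref{eq:mfg_ergodic} is treated with the same two ingredients in the stationary (elliptic) category. Existence follows from a fixed point on $\overline m$: for frozen $\overline m$ the cell problem $\lambda-\kappa\Delta\overline u+\tfrac12|D\overline u|^2=F(x,\overline m)$ with $\int_Q\overline u=0$ has a unique pair $(\lambda,\overline u)$ by the classical theory of ergodic Hamilton--Jacobi equations on the torus, and the stationary KFP equation is then solved by the Gibbs density. The explicit formula is obtained by direct verification: setting $\overline m=e^{-\overline u/\kappa}/\!\int_Q e^{-\overline u/\kappa}$ gives $D\overline m=-\kappa^{-1}\overline m\,D\overline u$, whence $-\kappa\Delta\overline m-\operatorname{div}(\overline m\,D\overline u)=0$ identically; in the normalization $\kappa=1$ of \cite{porretta_long_2012} this is exactly $\overline m=e^{-\overline u}/\!\int_Q e^{-\overline u}$. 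This is the unique normalized positive solution of the stationary KFP equation (an irreducible elliptic operator with one-dimensional kernel), and it is automatically smooth and strictly positive because $\overline u$ is. Uniqueness of the triple $(\lambda,\overline u,\overline m)$ again follows from the duality identity, now without the time integral and with the terms in $\lambda$ dropping out because $\int_Q(\overline m_1-\overline m_2)=0$; the normalization $\int_Q\overline u=0$ pins down the additive constant.

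The step I expect to be the genuine obstacle is the set of a priori estimates that make the Schauder set $\mathcal{K}$ invariant and compact, specifically the uniform $L^\infty$ bound on $m$ (and on $\overline m$) in the local-coupling regime. Since $F(x,m)=F(x,m(x))$ is evaluated pointwise, one needs $m$ bounded before $F(x,m)$ can be controlled, yet the parabolic bound on $m$ itself requires control of the drift $Du$, which in turn depends on $\|F(\cdot,m)\|_\infty$. Closing this loop requires the quadratic structure (the Hopf--Cole gradient bound on $Du$) together with a De Giorgi--Nash--Moser / bootstrap argument for the Fokker--Planck equation; it is here, rather than in the soft fixed-point and monotonicity steps, that the regularity hypotheses on $F$, $m_0$, and $G$ are really used.
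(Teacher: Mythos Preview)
The paper does not supply its own proof of this proposition: it is stated with the attribution ``Proposition~1 in \cite{porretta_long_2012}'' and no argument follows. It is quoted as background, so there is nothing in the present paper to compare your sketch against beyond the bare statement.

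That said, your outline is the standard route taken in the cited reference and in the original Lasry--Lions papers: Hopf--Cole to linearize the quadratic HJB, Schauder fixed point for existence, the duality computation for uniqueness, and direct verification of the Gibbs formula for $\overline m$. Two small points are worth tightening. First, in your uniqueness step you write that ``monotonicity of $F$ in $m$ forces $m_1=m_2$'': under the hypotheses here $F$ is only assumed increasing, not strictly increasing, so the vanishing of $\int (F(x,m_1)-F(x,m_2))(m_1-m_2)$ does not by itself yield $m_1=m_2$. The correct order is the one you already have available: strict convexity of $H$ and positivity of $m_i$ give $Du_1=Du_2$ first, and then $m_1=m_2$ follows from uniqueness for the linear KFP equation with common drift and common initial datum $m_0$. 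Second, you correctly flag that the formula $\overline m=e^{-\overline u}/\int_Q e^{-\overline u}$ matches the stationary KFP equation only when $\kappa=1$; with general $\kappa$ the exponent is $-\overline u/\kappa$, and the statement as written in the paper is implicitly using the normalization of \cite{porretta_long_2012}. Your identification of the a~priori $L^\infty$ bound on $m$ as the real technical crux in the local-coupling setting is accurate.
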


\begin{theorem}[Theorem 2.1 in \cite{porretta_long_2012} ] Let us denote $v^T(t,x) := u^T(tT, x)$ and $\mu^T(t,x) := m^T(tT,x)$. As $T \rightarrow +\infty$, we have:
\begin{enumerate}[label = (\roman*)]
    \item the function $v^T(t, \cdot)/T $ converges to $(1-t) \lambda$ in $L^1(Q)$ uniformly with respect to $t \in [0,1]$; 
     \item the functions $v/T$ and $v^T - \int_{Q} v^T(t,y) dy$ converge respectively to $(1-t) \lambda$ and $\overline{u}$ in $L^2((0,1) \times Q)$; 
     \item the function $\mu^T$ converges to $\overline{m}$ in $L^p((0,1) \times Q)$ for any $p < (d+2)/d$. 
\end{enumerate}
\end{theorem}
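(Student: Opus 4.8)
\noindent The plan is to work in the rescaled unknowns $v^T(t,x)=u^T(tT,x)$ and $\mu^T(t,x)=m^T(tT,x)$, which on $(0,1)\times\mathbb{T}^d$ solve $-\frac1T\partial_t v^T-\kappa\Delta v^T+\frac12|Dv^T|^2=F(x,\mu^T)$ and $\frac1T\partial_t\mu^T-\kappa\Delta\mu^T-\operatorname{div}(\mu^T Dv^T)=0$, with $\mu^T(0,\cdot)=m_0$ and $v^T(1,\cdot)=G(\cdot,\mu^T(1,\cdot))$. Everything is driven by one uniform-in-$T$ estimate, together with the observation that the spatially-constant part of $u^T$, which is of order $\lambda T$, is invisible to the signed measure $m^T-\overline m$ because the latter has zero mean on $\mathbb{T}^d$.

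The engine is the Lasry--Lions monotonicity computation. Working in the original time $s\in(0,T)$, I would differentiate $\Phi(s):=\int_{\mathbb{T}^d}(u^T-\overline u)(m^T-\overline m)\,dx$, substitute the two equations of \eqref{eq:mfg_t} and the two of \eqref{eq:mfg_ergodic}, integrate by parts on the torus, and use the ergodic identity $\int_{\mathbb{T}^d}\overline m\,F(x,\overline m)=\lambda-\frac12\int_{\mathbb{T}^d}\overline m\,|D\overline u|^2$ to cancel every $\lambda$-term. The outcome is
\begin{equation*}
\Phi'(s)=-\tfrac12\int_{\mathbb{T}^d}(m^T+\overline m)\,\lvert Du^T-D\overline u\rvert^2\,dx-\int_{\mathbb{T}^d}(m^T-\overline m)\big(F(x,m^T)-F(x,\overline m)\big)\,dx,
\end{equation*}
where both terms are nonnegative (the first trivially, the second since $F$ is increasing in $m$), so $\Phi$ is nonincreasing. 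Because $m^T(s)-\overline m$ integrates to zero, the order-$\lambda T$ constant in $u^T$ drops out of $\Phi(0)$ and $\Phi(T)$, leaving them controlled by $\|G\|_\infty$ and by the spatial oscillation of $u^T$ at $s=0,T$, uniformly in $T$. Integrating then gives
\begin{equation*}
\int_0^T\!\!\int_{\mathbb{T}^d}\Big[\tfrac12(m^T+\overline m)\lvert Du^T-D\overline u\rvert^2+(m^T-\overline m)\big(F(x,m^T)-F(x,\overline m)\big)\Big]\,dx\,ds=\Phi(0)-\Phi(T)\le C,
\end{equation*}
with $C$ independent of $T$.

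Rescaling to $(0,1)$ turns this into $\int_0^1\!\int_{\mathbb{T}^d}[\,\cdots\,]\le C/T\to0$. Since $\overline m$ is bounded below, the gradient part forces $Dv^T\to D\overline u$ in $L^2((0,1)\times Q)$; by the Poincar\'e inequality applied to the zero-mean function $v^T-\int_Q v^T(t,y)\,dy$ this gives the second convergence in (ii). The monotonicity part forces $(\mu^T-\overline m)(F(x,\mu^T)-F(x,\overline m))\to0$ in $L^1$; combined with a uniform $L^{(d+2)/d}$ bound on $\mu^T$ from the conserved mass and the parabolic Sobolev inequality, this upgrades to $\mu^T\to\overline m$ in $L^p((0,1)\times Q)$ for every $p<(d+2)/d$, which is (iii). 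For the remaining statements I would treat the spatial average $\overline v^T(t):=\int_Q v^T(t,y)\,dy$ separately: integrating the rescaled Hamilton--Jacobi equation over $Q$ kills the Laplacian and yields $\frac{d}{dt}\big(\overline v^T/T\big)=\frac12\int_Q|Dv^T|^2-\int_Q F(x,\mu^T)$, whose right-hand side converges to $\frac12\int_Q|D\overline u|^2-\int_Q F(x,\overline m)=-\lambda$. With the normalization $\overline v^T(1)/T=\int_Q G/T\to0$ and an Arzel\`a--Ascoli argument (the derivative is bounded, giving equicontinuity in $t$), integrating backward from $t=1$ gives $\overline v^T/T\to(1-t)\lambda$ uniformly in $t$; a uniform-in-$s$ oscillation bound on $u^T$ makes $\|v^T(t)-\overline v^T(t)\|_{L^1(Q)}=O(1)$, so dividing by $T$ yields (i), and combining with $v^T-\overline v^T\to\overline u$ yields the first convergence in (ii).

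The main obstacle is not the algebra of the monotonicity identity but the a priori regularity needed to make the two extractions rigorous and sharp. Two points are delicate. First, controlling $\Phi(0)$ and establishing the uniform (i)-convergence both require a gradient/oscillation bound on $u^T(s,\cdot)$ uniform in $s$ and $T$; for the quadratic Hamiltonian this is where the Hopf--Cole transform $\phi=e^{-u^T/(2\kappa)}$ (which linearizes the Hamilton--Jacobi equation and underlies the Gibbs relation $\overline m=e^{-\overline u}/\int_Q e^{-\overline u}$) or a Bernstein-type estimate enters, and the local, non-smoothing nature of $F$ makes these estimates genuinely harder than in the smoothing case. Second, turning the vanishing of the weak monotonicity functional into strong $L^p$ convergence of $\mu^T$ with the sharp threshold $(d+2)/d$ relies on parabolic regularity for the Fokker--Planck equation rather than on any quantitative monotonicity of $F$, since $F$ is assumed merely increasing; the exponent $(d+2)/d$ is precisely the integrability furnished by the parabolic Sobolev embedding for the natural energy class of $\mu^T$.
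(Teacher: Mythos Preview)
The paper under review does not contain a proof of this theorem: it is quoted verbatim as Theorem~2.1 of \cite{porretta_long_2012} in the literature-review section, so there is no ``paper's own proof'' to compare against. That said, your sketch is faithful to the strategy of the cited reference: the Lasry--Lions duality computation for $\Phi(s)=\int_{\mathbb{T}^d}(u^T-\overline u)(m^T-\overline m)\,dx$, the resulting $O(1)$ bound on the time-integrated dissipation, the Poincar\'e step for $v^T-\langle v^T\rangle\to\overline u$, the parabolic-Sobolev route to the $L^p$ threshold $(d+2)/d$ for $\mu^T$, and the separate ODE treatment of the spatial average $\overline v^T/T$ are precisely the ingredients of \cite{porretta_long_2012}. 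You also correctly flag the two genuine technical inputs---the uniform-in-$T$ oscillation/gradient bound on $u^T$ (obtained there via the Hopf--Cole linearization specific to the quadratic Hamiltonian) and the parabolic regularity for the Fokker--Planck equation that supplies the sharp integrability of $\mu^T$---so your outline is sound.
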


\begin{proposition}[Lemma 2.4 and (13) in  \cite{porretta_long_2012} ]

 $$\underset{T \rightarrow+\infty}{\lim} \frac{1}{T} \int_0^T \int_{Q}\lvert Du - D \overline{u} \rvert^2 dx dt = 0,$$
 
    $$\underset{T \rightarrow+\infty}{\lim} \frac{1}{T} \int_0^T \int_{Q} (F(x,m^T) -F(x,\overline{m}) ) (m^T-\overline{m}) dx dt = 0.$$
\end{proposition}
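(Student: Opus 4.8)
The plan is to run the classical Lasry--Lions monotonicity computation in time-integrated form, and then to absorb the endpoint contributions using bounds that are uniform in the horizon $T$. First I would introduce the coupling functional
\[
\Phi(t) := \int_Q \bigl(u^T(t,x) - \overline{u}(x)\bigr)\bigl(m^T(t,x) - \overline{m}(x)\bigr)\,dx,
\]
subtract the ergodic system \eqref{eq:mfg_ergodic} from the finite-horizon system \eqref{eq:mfg_t}, and differentiate $\Phi$ in time, substituting $\partial_t u^T$ and $\partial_t m^T$ from the two equations. On the torus there are no boundary terms, so the two second-order contributions $-\kappa\int_Q \Delta(u^T-\overline{u})(m^T-\overline{m})$ and $\kappa\int_Q (u^T-\overline{u})\Delta(m^T-\overline{m})$ cancel after integrating by parts twice, and the constant $\lambda$ drops out because $\int_Q (m^T-\overline{m})\,dx = 0$.

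The next step is the algebraic simplification specific to $H(x,p)=\tfrac12\lvert p\rvert^2$, for which $H_p(x,p)=p$. Combining the Hamiltonian term $\int_Q\bigl(\tfrac12\lvert Du^T\rvert^2 - \tfrac12\lvert D\overline{u}\rvert^2\bigr)(m^T-\overline{m})$ with the transport term $-\int_Q D(u^T-\overline{u})\cdot\bigl(m^T Du^T - \overline{m}\,D\overline{u}\bigr)$ and completing the square yields exactly $-\tfrac12\int_Q (m^T+\overline{m})\,\lvert Du^T - D\overline{u}\rvert^2\,dx$. Together with the coupling contribution this gives the differential identity
\[
\frac{d}{dt}\Phi(t) = -\tfrac12\int_Q (m^T+\overline{m})\,\lvert Du^T - D\overline{u}\rvert^2\,dx - \int_Q \bigl(F(x,m^T)-F(x,\overline{m})\bigr)\bigl(m^T-\overline{m}\bigr)\,dx,
\]
whose right-hand side is a sum of two non-positive quantities: the first because $m^T,\overline{m}\ge 0$, and the second because $F(x,\cdot)$ is increasing. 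Integrating over $[0,T]$ then produces
\[
\tfrac12\int_0^T\!\!\int_Q (m^T+\overline{m})\lvert Du^T - D\overline{u}\rvert^2 + \int_0^T\!\!\int_Q \bigl(F(x,m^T)-F(x,\overline{m})\bigr)(m^T-\overline{m}) = \Phi(0)-\Phi(T).
\]

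It then remains to show that $\lvert\Phi(0)-\Phi(T)\rvert$ stays bounded uniformly in $T$. The essential observation is that only the zero-mean part of $u^T$ enters $\Phi$: since $\int_Q(m^T-\overline{m})=0$ and $\int_Q\overline{u}=0$, one may replace $u^T-\overline{u}$ by $\bigl(u^T-\int_Q u^T\bigr)-\overline{u}$, so by Cauchy--Schwarz $\lvert\Phi(t)\rvert$ is controlled by the oscillation of $u^T(t,\cdot)$ and by $\lVert m^T(t,\cdot)-\overline{m}\rVert_{L^2}$. This is exactly what removes the linearly growing spatial-constant term $(1-t)\lambda T$ seen in the preceding theorem. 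Evaluating at $t=0$ and $t=T$, one uses $m^T(0)=m_0$ (fixed) and $u^T(T,\cdot)=G(\cdot,m^T(T))$ with $G\in\mathcal{C}^2$ periodic, hence of bounded oscillation; the remaining ingredient is the $T$-uniform a priori control of the gradient/oscillation of $u^T$ and of $\lVert m^T\rVert_{L^2}$ supplied by Lemma 2.4 of \cite{porretta_long_2012}.

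Granting these bounds, $\Phi(0)-\Phi(T)=O(1)$, so dividing the integrated identity by $T$ and letting $T\to\infty$ sends both non-negative time-averages to zero. The second limit is then immediate. For the first, I would finally invoke $\overline{m}\ge\min_Q\overline{m}>0$ (valid since $\overline{m}$ is smooth and strictly positive on the compact torus) to discard the weight and recover $\frac1T\int_0^T\int_Q\lvert Du^T - D\overline{u}\rvert^2\to 0$. The main obstacle is precisely the uniform-in-$T$ a priori estimates controlling $\Phi(0)$ and $\Phi(T)$: the differential identity is a short computation, but bounding the endpoint terms independently of the horizon requires the Bernstein-type gradient estimates for the Hamilton--Jacobi--Bellman equation and the parabolic $L^p$ bounds for the Fokker--Planck equation that constitute the technical heart of the reference.
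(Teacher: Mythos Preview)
The paper does not supply its own proof of this proposition: it is stated purely as a quotation of Lemma~2.4 and equation~(13) from \cite{porretta_long_2012}, so there is no in-paper argument to compare against. Your proposal is correct and is exactly the classical Lasry--Lions monotonicity computation that underlies the cited result: the differential identity for $\Phi(t)$ is derived correctly (the algebra with $H(x,p)=\tfrac12|p|^2$ indeed collapses to $-\tfrac12\int_Q(m^T+\overline m)|Du^T-D\overline u|^2$), the sign observations are the right ones, and the reduction to a uniform bound on $\Phi(0)-\Phi(T)$ is the standard closing step.

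Two minor sharpenings. First, in the present setting $G$ depends on $x$ only, so $u^T(T,\cdot)=G$ is fixed; combined with $\int_Q(m^T(T)-\overline m)=0$ and $\lVert m^T(T)-\overline m\rVert_{L^1(Q)}\le 2$, this already gives $|\Phi(T)|\le \operatorname{osc}(G-\overline u)$ without any parabolic $L^p$ estimate on $m^T$. Second, the only nontrivial uniform-in-$T$ input you actually need is the bound on the oscillation of $u^T(0,\cdot)$, which follows from the Bernstein gradient estimate $\sup_{t,x}|Du^T|\le C$ uniformly in $T$; this is precisely what Lemma~2.4 in \cite{porretta_long_2012} provides. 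With those two points made explicit, your sketch is a complete proof.
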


\begin{theorem}
   Furthermore, if we assume the following strengthened form of the Lasry-Lions monotonicity condition, namely there exists $\gamma > 0$ such that for all  $m_1 \geq  m_2$ in $\mathbb{R}$ and all $x \in \mathbb{R}^d$ 
   \begin{equation}
       \label{fo:strong_monotonicity}
   \Bigl(F(x,m_1) - F(x, m_2) \Bigr) (m_1 - m_2) \geq \gamma (m_1 - m_2)^2 
   \end{equation}
   then, there exists $\omega>0$ (independent of $T$) such that: 
    \begin{equation}
        \lVert \tilde{u}(t) - \overline{u} \rVert_{L^1(Q)} \leq \frac{C}{T-t} \Big(e^{-\omega t} + e^{-\omega (T-t)} \Big), \quad\quad t \in (0, T), \label{eq:lin_tp_u}
    \end{equation}
     \begin{equation}
        \lVert m(t) - \overline{m} \rVert_{L^1(Q)} \leq \frac{C}{t}  \Big(e^{-\omega t} + e^{-\omega (T-t)} \Big), \quad\quad t \in (0, T), \label{eq:lin_tp_m}
    \end{equation}
    and 
    \begin{equation}
        \Big\lVert \frac{u^T(t)}{T} - \overline{\lambda} \Big(1 - \frac{t}{T} \Big) \Big\rVert_{L^1(Q)} \leq \frac{C}{T}, \quad\quad t \in (0, T-1),
    \end{equation}        
    where $\tilde{u} = u^T - \langle u^T \rangle $,  $\langle u \rangle = \frac{1}{\lvert Q\rvert }\int_Q u(x)dx$ and $C$ may depend on the initial and terminal conditions.
\end{theorem}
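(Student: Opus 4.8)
The cornerstone of the argument is a duality identity obtained by pairing the Hamilton--Jacobi--Bellman equation of \eqref{eq:mfg_t} against the difference $m^T-\overline u$\,'s partner $m^T-\overline m$ and the Fokker--Planck equation against $u^T-\overline u$. Writing $w=u^T-\overline u$ and $\mu=m^T-\overline m$ and subtracting the ergodic system \eqref{eq:mfg_ergodic} from \eqref{eq:mfg_t}, I would multiply the HJB difference by $\mu$, the FP difference by $w$, add them and integrate over $Q$. On the torus the two Laplacian contributions cancel after integrating by parts twice, the ergodic constant drops because $\int_Q\mu\,dx=0$, and the Hamiltonian term $\tfrac12(|Du^T|^2-|D\overline u|^2)\mu$ combines with the flux term coming from $\operatorname{div}(m^TDu^T-\overline mD\overline u)$ to collapse into a single clean quadratic. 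The outcome is the energy--dissipation relation
\[
\frac{d}{dt}\int_Q w\,\mu\,dx=-\frac12\int_Q (m^T+\overline m)\,|Dw|^2\,dx-\int_Q\bigl(F(x,m^T)-F(x,\overline m)\bigr)\mu\,dx=:-D(t),
\]
in which both terms are nonnegative, and under the strengthened monotonicity \eqref{fo:strong_monotonicity} the coupling term is bounded below by $\gamma\|\mu\|_{L^2(Q)}^2$. Integrating in time this identity already reproduces the time-averaged statements recalled above.

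To turn dissipation into an exponential rate I would exploit a coercivity hidden in $D(t)$. Since $\int_Q\mu=0$, the mean of $u^T$ is irrelevant and $\Phi(t):=\int_Q w\mu=\int_Q(\tilde u-\overline u)\mu$; as $\tilde u-\overline u$ has zero average, the Poincaré inequality on $\mathbb{T}^d$ gives $\|\tilde u-\overline u\|_{L^2}\le C_P\|Dw\|_{L^2}$, while $\overline m\ge c_0>0$ gives $\tfrac12\int_Q\overline m|Dw|^2\ge \tfrac{c_0}{2}\|Dw\|_{L^2}^2$. Combining these with $D(t)\ge \tfrac{c_0}{2}\|Dw\|_{L^2}^2+\gamma\|\mu\|_{L^2}^2$ and Cauchy--Schwarz produces a constant $\omega_0>0$, independent of $T$, with $D(t)\ge \omega_0|\Phi(t)|$. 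Hence $\Phi'=-D\le-\omega_0|\Phi|$; because $\Phi$ is bounded uniformly in $T$ (from the a priori estimates underlying the weak convergence above), a bounded function obeying this inequality can never become negative without decaying to $-\infty$, so $\Phi\ge 0$ and $\Phi(t)\le \Phi(0)e^{-\omega_0 t}$. Integrating $D=-\Phi'$ over unit windows then shows the dissipation is exponentially small going forward in time.

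The heart of the matter, and the step I expect to be the main obstacle, is upgrading this scalar, one-sided decay to the sharp two-sided pointwise $L^1$ bounds for $m^T$ and $\tilde u$ separately. Here I would linearize around $(\overline u,\overline m)$: the marginal $m^T-\overline m$ then obeys a forward parabolic equation whose generator has a spectral gap and contracts toward $\overline m$ forward in time, while $u^T-\overline u$ obeys the backward parabolic equation with generator $-\partial_t-\kappa\Delta+D\overline u\cdot D$, which contracts toward $\overline u$ modulo constants backward in time; the couplings $F_m(x,\overline m)\mu$ and the quadratic remainder $\tfrac12|Dw|^2$ enter as sources. The forward gap yields the $e^{-\omega t}$ term for $m^T$ and the backward gap the $e^{-\omega(T-t)}$ term for $\tilde u$, while the weakly coupled sources generate the cross terms; parabolic smoothing converts the integrated bounds of the previous step into pointwise-in-time estimates, the time-averaging producing the algebraic prefactors $1/t$ (forward, for $m$) and $1/(T-t)$ (backward, for $u$). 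Poincaré together with $\overline m\ge c_0$ transfers the weighted gradient control to $\|\tilde u-\overline u\|_{L^1(Q)}$, and Cauchy--Schwarz passes $m^T-\overline m$ from $L^2$ to $L^1$. The genuine difficulty is that this linearized picture must be justified globally, not just in a small neighborhood of the stationary state: one must control the nonlinear terms uniformly over the long horizon, which is precisely why the weak convergence recalled above is invoked first, guaranteeing that the solution sits near $(\overline u,\overline m)$ on the bulk of $[0,T]$ so that the quadratic and coupling terms are genuinely perturbative, and then gluing the interior estimate to the boundary layers near $t=0$ and $t=T$.

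Finally, for the statement on $u^T/T$ I would integrate the HJB equation over $Q$, where the Laplacian term vanishes on the torus, and subtract the spatially averaged ergodic identity to obtain $\tfrac{d}{dt}\langle u^T\rangle=-\overline\lambda+\tfrac12\langle|Du^T|^2-|D\overline u|^2\rangle-\langle F(\cdot,m^T)-F(\cdot,\overline m)\rangle$. The last two averages are integrable in time with total mass $O(1)$ by the exponential estimates just obtained, so integrating from $t$ to $T$ and using $u^T(T,\cdot)=G(\cdot,m^T(T))=O(1)$ gives $\langle u^T(t)\rangle=\overline\lambda\,(T-t)+O(1)$. Writing $u^T=\tilde u+\langle u^T\rangle$ and combining with the already-established smallness of $\|\tilde u-\overline u\|_{L^1}$ and the boundedness of $\overline u$ yields $\|u^T(t)/T-\overline\lambda(1-t/T)\|_{L^1(Q)}\le C/T$ for $t\in(0,T-1)$.
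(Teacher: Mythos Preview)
The paper does not supply its own proof of this theorem; it is quoted from \cite{porretta_long_2012}, and the remark immediately following the statement points to Lemma~3.2 of that reference for the mechanism behind the explicit rate $\omega=\tfrac12\min\{2\pi^2\min_Q\overline m,\gamma\}$. Your sketch is very much in the spirit of that proof: the duality identity $\Phi'(t)=-D(t)$ with $D(t)=\tfrac12\int_Q(m^T+\overline m)|Dw|^2+\int_Q(F(\cdot,m^T)-F(\cdot,\overline m))\mu$ is exactly the starting point there, and the coercivity $D(t)\ge \omega_0|\Phi(t)|$ obtained by combining Poincar\'e--Wirtinger, the lower bound $\overline m\ge c_0$, and the strong monotonicity \eqref{fo:strong_monotonicity} is precisely how the rate in the paper's remark emerges.

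One step needs correction. The assertion that $\Phi\ge 0$ on all of $[0,T]$ does not follow from uniform boundedness: the problem lives on a finite interval and $\Phi(T)=\int_Q(G-\overline u)(m^T(T)-\overline m)$ has no sign, so $\Phi$ may well become negative near $t=T$. What the inequality $\Phi'=-D\le -\omega_0|\Phi|$ actually yields is a two-sided bound: since $\Phi$ is nonincreasing it changes sign at most once, and on the interval where $\Phi\ge 0$ one gets $\Phi(t)\le \Phi(0)e^{-\omega_0 t}$, while on the interval where $\Phi\le 0$ the relation $|\Phi|'=D\ge\omega_0|\Phi|$ gives $|\Phi(t)|\le |\Phi(T)|e^{-\omega_0(T-t)}$. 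Combined with the uniform bounds on $\Phi(0),\Phi(T)$ this produces $|\Phi(t)|\le C(e^{-\omega_0 t}+e^{-\omega_0(T-t)})$ directly, and integrating $D=-\Phi'$ gives the same control on $\int_{t_1}^{t_2}D(s)\,ds$; this is the content of Lemma~3.2 in \cite{porretta_long_2012}. The subsequent upgrade to the pointwise $L^1$ bounds \eqref{eq:lin_tp_u}--\eqref{eq:lin_tp_m} in that reference proceeds not by linearizing around $(\overline u,\overline m)$ but by direct parabolic regularity for the nonlinear forward and backward equations, feeding in the integrated dissipation bound as a source estimate; this is what generates the $1/t$ and $1/(T-t)$ prefactors. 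Your linearization heuristic points in the right direction but would require a separate global control of the quadratic remainder, which, as you correctly flag, is the genuine obstacle.
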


Going over the proof of the exponential turnpike property in \cite{porretta_long_2012}, we can refine the conclusion of this theorem and obtain the explicit exponential rate of convergence: 
\begin{equation}
    \omega = \frac{1}{2} \min \{ 2 \pi^2 \cdot  \underset{Q}{\min} \, \overline{m}, \gamma \},
\end{equation}
stemming from the Poincaré-Wirtinger inequality and a better control of the constants appearing through the proof of Lemma 3.2 in \cite{porretta_long_2012}. 

\vskip 2pt
\paragraph{\textbf{Example}} A typical coupling satisfying the strengthened monotonicity condition is: 
$$
F(x, m) = \gamma m + U(x),
$$ 
for some $\gamma > 0 $ and  a $\mathcal{C}^1 $ function $U$ which is $\mathbb{Z}^d$-periodic. In this setting, players have some preference for their given state $x$ according to the potential $U(x)$, but also care about the number 
of other players in their close vicinity through the linear dependence in $m$. The positive parameter  $\gamma$ corresponds to attractive interactions such as herding. A negative value would correspond to aversion of crowded regions. 

\subsection{Numerical Illustration of the Turnpike Phenomenon}
Inspired by the example provided in \cite{carmona_convergence_2021}, we consider a running cost satisfying the previous assumptions with $\gamma = 1$: 
$$ F(x,m) = \gamma m(x) +  50 * (0.1* \cos(2\pi x) + \cos(4 \pi x) + 0.1*\sin(2 \pi (x-\pi/8))).$$
We consider a terminal cost of the shape $G(x,m) = \psi \sin(2\pi (x + \frac{1}{4}))$ for $\psi = 1$ and an initial condition $m_0(x) = \exp(-(x - \mu_0)/(2 \sigma_0^2)) / \int_{Q} \exp(-(y - \mu_0)/(2 \sigma_0^2)) dy $ with $\mu_0 = 0.5$ and $\sigma_0 = 0.2$. 
\begin{figure}[ht]
    \centering
    \begin{subfigure}[t]{0.45\textwidth}
        \centering
        \includegraphics[width = 7cm]{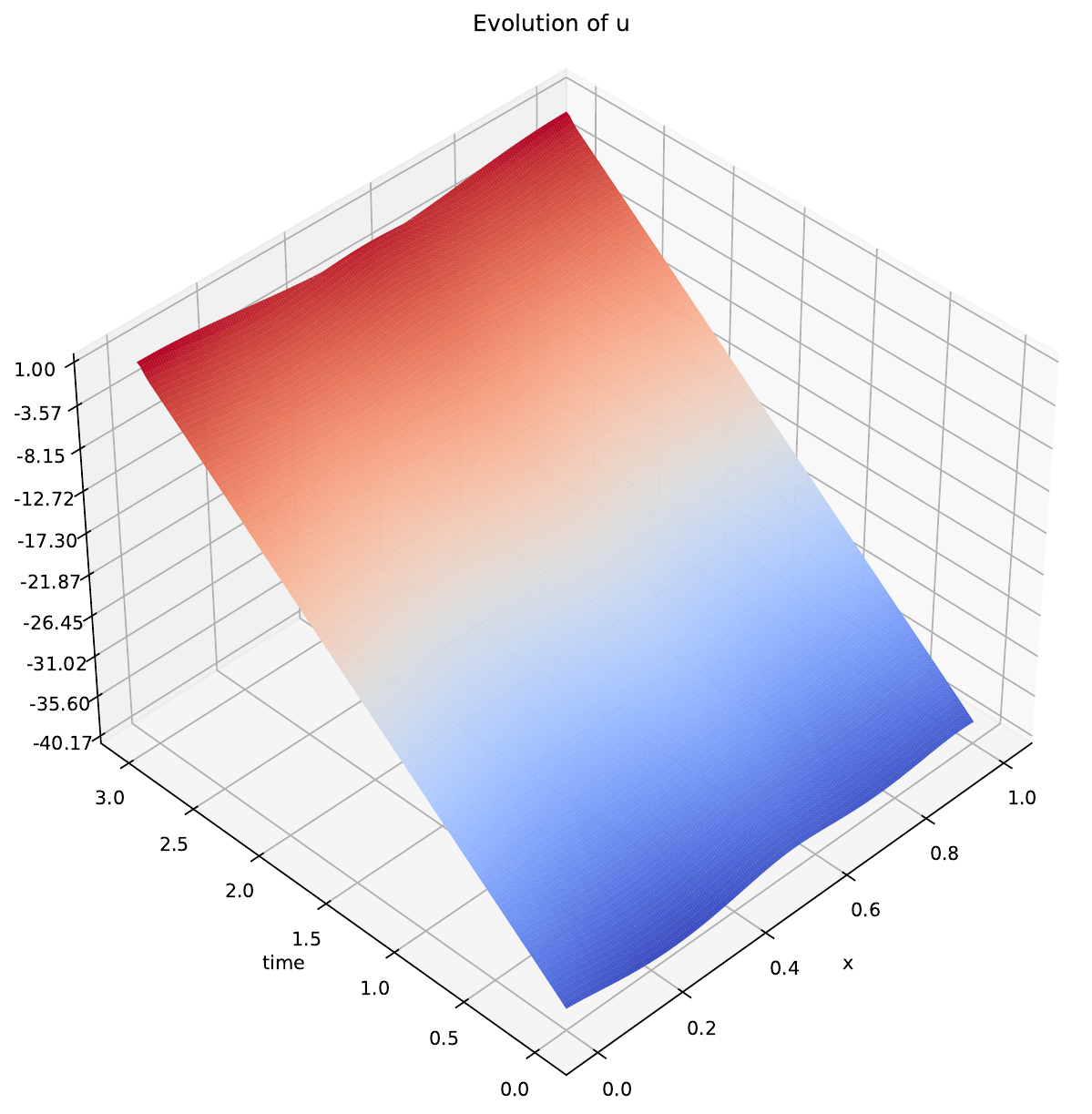}
        \caption{Plot of value function $u^T$}
            \label{fig:agent_trading_speeds_set_0}
    \end{subfigure} 
    ~ 
    \begin{subfigure}[t]{0.45\textwidth}
        \centering
          \includegraphics[width = 7cm]{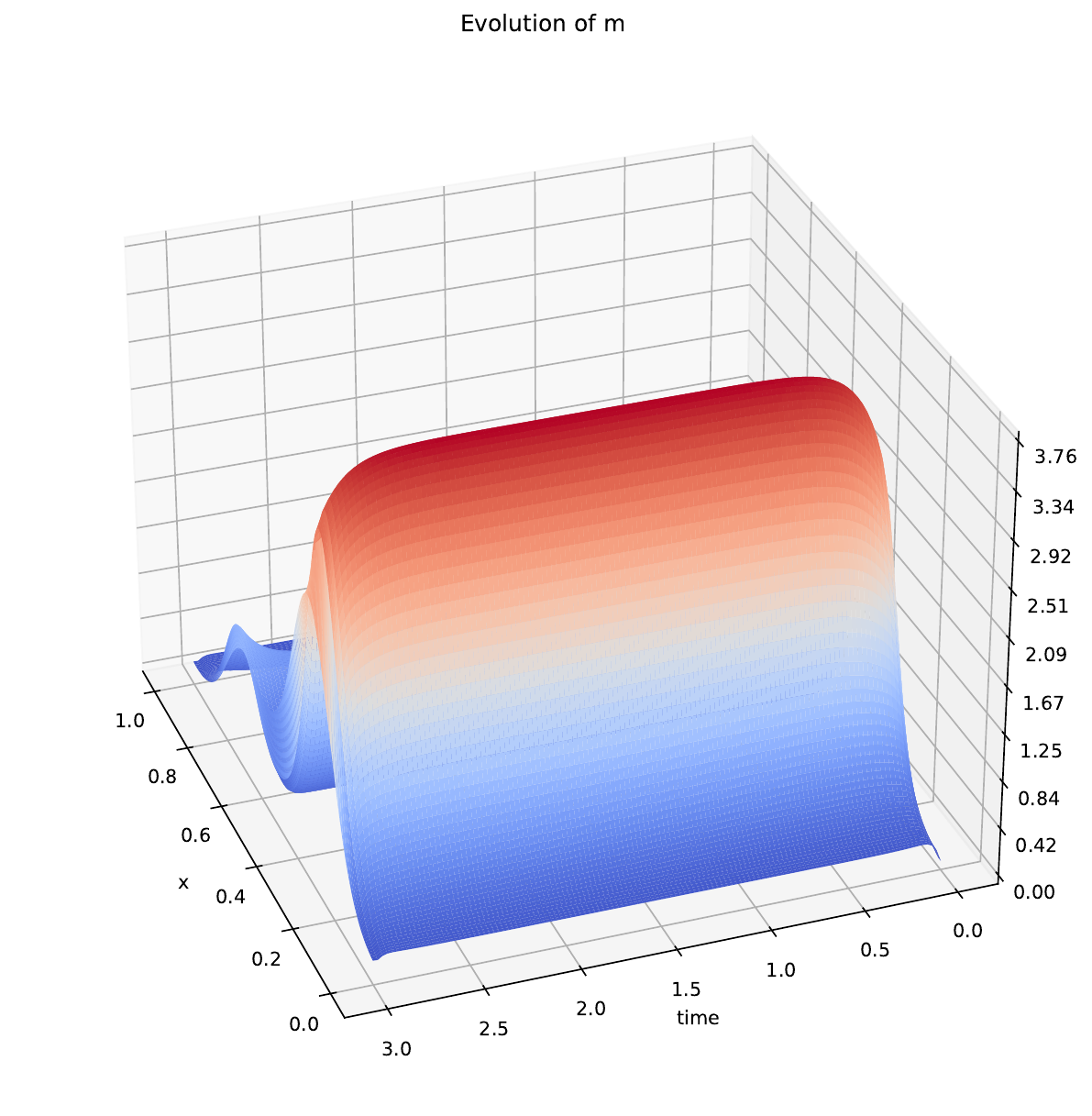}
    \caption{Plot of the density function $m^T$}
    \label{fig:agent_inventories_set_0}
    \end{subfigure}
    \caption{Behavior of the solution $(u^T; m^T)$ of the finite horizon game}
\end{figure}

\begin{figure}[ht]
    \centering
    \begin{subfigure}[t]{0.45\textwidth}
        \centering
        \includegraphics[width = 7cm]{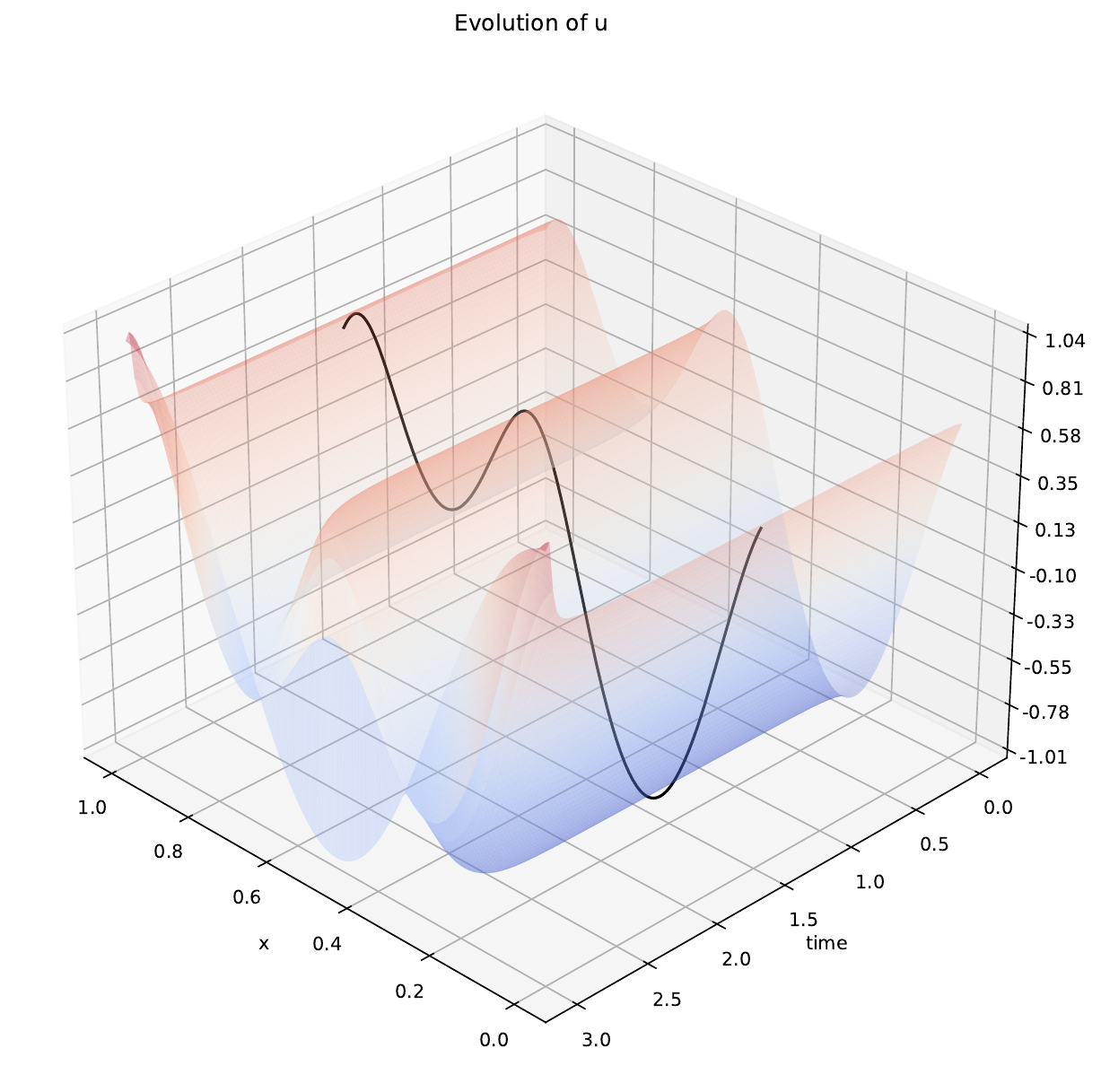}
        \caption{Plot of normalized value function $u^T - \langle u^T \rangle $}
    \end{subfigure} 
    ~ 
    \begin{subfigure}[t]{0.45\textwidth}
        \centering
          \includegraphics[width = 7cm]{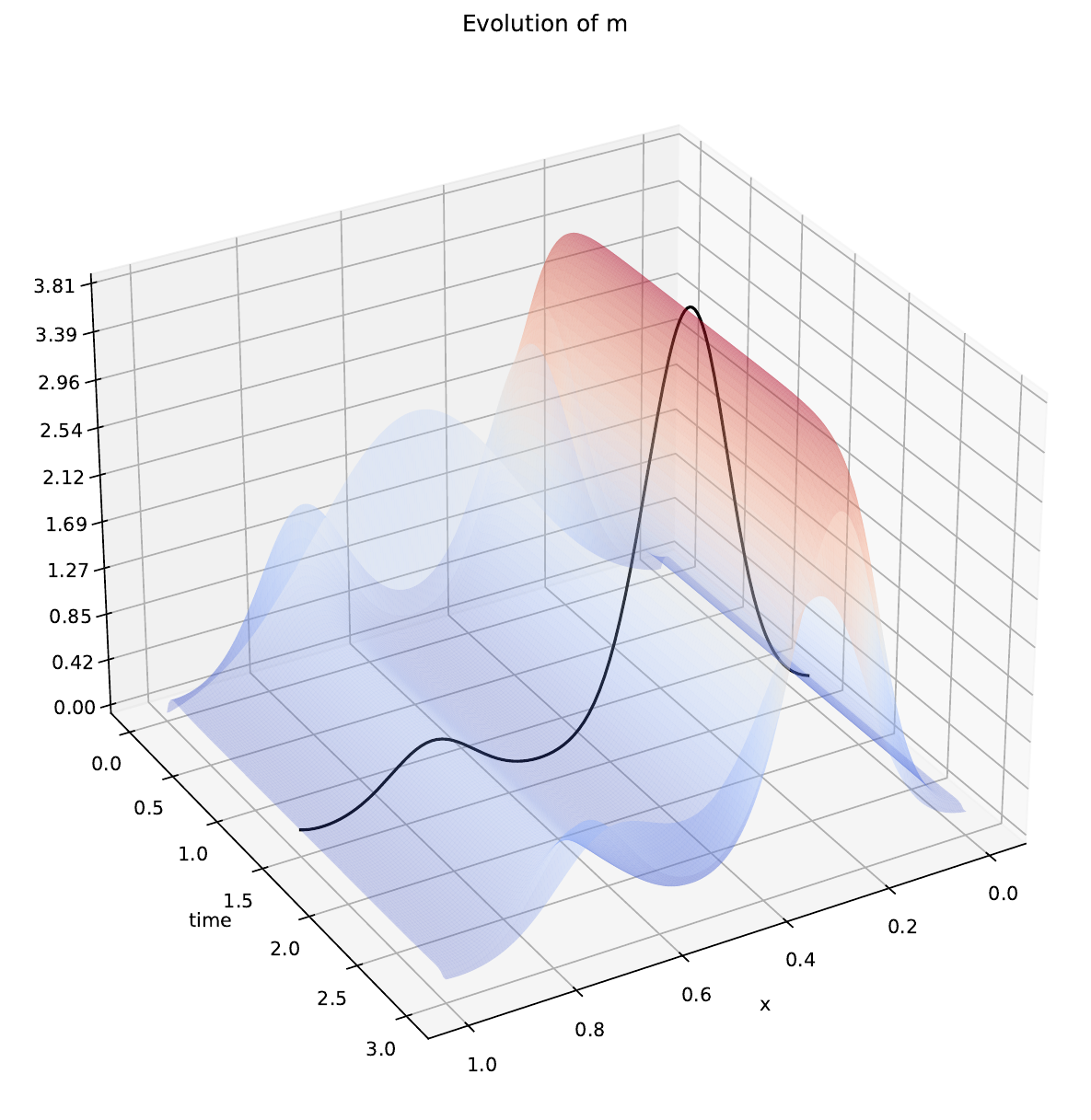}
    \caption{Plot of the density function $m^T$}
    \end{subfigure}
    \caption{Behavior of the solution $(u^T; m^T)$ of the finite horizon game}
\end{figure}

We computed the solution of the finite horizon MFG by a standard Finite Difference Method (FDM): the details of the implementation are provided in Appendix \ref{appendix:A}. The solutions are plotted over a $200\times 200$ grid on $[0,T] \times [0,1]$. We also computed the so-called turnpike by approximating the solution of the ergodic Mean Field Game \eqref{eq:mfg_ergodic} using the Deep Galerkin Method described in \cite{carmona_convergence_2021}. For the sake of completeness, a brief description of the DGM method for the ergodic case is provided in Appendix \ref{appendix:B}. We confirmed that this example has the turnpike property by plotting the $L^2$ norm of $u(t,\cdot) - \int_{Q} u(t,x) dx - \overline{u}$ and $m(t,\cdot) - \overline{m}$  with respect to time in Figure \ref{fig:tp_check}. We observe that starting from the initial density, the distance of the mass to the ergodic distribution quickly converges to zero and stays there for most of the time interval $[0,T]$, before increasing again near the time horizon $T$, when the terminal density is influenced by the terminal cost through the value function. Similarly, we observe that the space-mean adjusted value function stays close to the steady state for most of the time interval, except at times close to the horizon $T$ because of the terminal condition. 

\begin{figure*}[ht]
    \centering
    \includegraphics[width = 8cm]{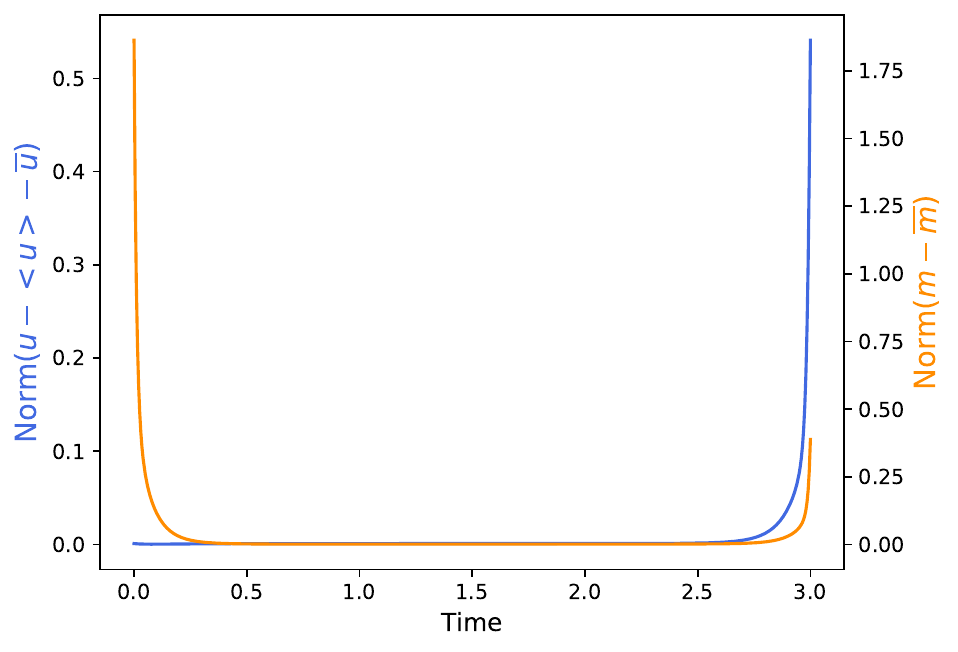}
    \caption{Norm of $m^T - \overline{m}$ and $u^T-\langle u^T \rangle - \overline{u}$}
    \label{fig:tp_check}
\end{figure*} 

\section{\textbf{Deep Galerkin Method: Baseline and Turnpike-accelerated Versions}}
 \label{section:3}

In this section, we provide a description of the two numerical methods that are being investigated : the baseline Deep Galerkin Method as introduced in \cite{SIRIGNANO20181339} and a so-called \textit{turnpike-accelerated} DGM version that incorporates the additional turnpike estimates in the loss function to be minimized. 

\subsection{Baseline Deep Galerkin Method for the Finite Horizon PDE System}

We approximate the functions $u(t,x)$ and $m(t,x)$ by two deep neural networks $u_{\theta} := \phi(t,x;\theta)$ and  $m_{\eta} := \chi(t,x;\eta)$ where $\theta \in \mathbb{R}^K$ and $\eta \in \mathbb{R}^K$ are the respective neural networks parameters to be learned. We denote for $x = (x_1, x_2, \dots, x_d) \in \mathbb{R}^d$ and $ z \in \mathbb{R}$, $(x^{-k}, z) := (x_1, \dots, x_{k-1}, z, x_{k+1}, \dots, x_d)$. We then consider the following loss function : 
\begin{align*}
    \mathcal{L}(u, m) = & \, \, C^{(HJB)} \mathcal{L}^{(HJB)}(u, m) +  C^{(KFP)}  \mathcal{L}^{(KFP)} (u, m) + C^{(init)} \mathcal{L}^{(init)}(u, m) \\ 
    & +  C^{term}  \mathcal{L}^{(term)} (u, m) + C^{(norm)} \mathcal{L}^{(norm)}(u, m) + C^{(period)} 
 \mathcal{L}^{(period)}(u, m),
\end{align*}
where, using the notation $F[m]=  F(x , m(t,x) )$,  
\begin{align*}
    & \mathcal{L}^{(HJB)}(u, m)  = \Big\lVert -\partial_t u - \frac{1}{2} \Delta u + \frac{1}{2} \lvert \nabla u \rvert^2 - F[m] \Big\rVert_{L^2([0,T] \times \mathbb{T}^d)}^2, \\
    & \mathcal{L}^{(KFP)}(u, m)  = \Big\lVert \partial_t m  - \frac{1}{2} \Delta m - \operatorname{div} (m \nabla u) \Big\rVert_{L^2([0,T] \times\mathbb{T}^d)}^2, \\
    & \mathcal{L}^{(init)}(u, m) = \Big \lVert m(0, \cdot) - m_0 \Big \rVert_{L^2(\mathbb{T}^d)}^2, \\ 
    & \mathcal{L}^{(term)} (u, m) = \Big \lVert u(T, \cdot) - G( \cdot, m_T) \Big \rVert_{L^2(\mathbb{T}^d)}^2,  \\ 
    &  \mathcal{L}^{(norm)}(u, m) = \Big \lvert \int_{\mathbb{T}^d} u(t,x) dx \Big\rvert + \Big\lvert \int_{\mathbb{T}^d} m(t,x) dx - 1 \Big\rvert, \\
    & \mathcal{L}^{(period)}(u, m) =  \sum_{k=1}^d \lvert u(t,(x^{-k}, 0)) - u(t, (x^{-k}, 1) ) \rvert^2 + \sum_{k=1}^d  \lvert m(t,(x^{-k}, 0)) - m(t, (x^{-k}, 1) ) \rvert^2 . 
\end{align*}
and $ C^{(HJB)}$, $C^{(KFP)}$, $C^{(init)}$, $C^{term}$, $C^{(norm)}$, and $C^{(period)}$ are positive constants controlling the relative importance of each loss component. 

\vskip 1 \baselineskip

The objective is to find the right set of parameters $\theta$, $\eta$, so that the neural networks $\phi$ and $\chi$ minimize an approximation $L(u_{\theta}, m_{\eta})$ of the loss functional $\mathcal{L}(u , m)$ computed on a mini-batch: 
\begin{align}
   L(u_{\theta}, m_{\eta}) = & \, \, C^{(HJB)} L^{(HJB)}(u, m) +  C^{(KFP)}  L^{(KFP)} (u, m) + C^{(init)} L^{(init)}(u, m) \nonumber\\ 
    & +  C^{term} L^{(term)} (u, m) + C^{(norm)} L^{(norm)}(u, m) + C^{(period)} L^{(period)}(u, m),  \label{eq:disc_loss}
\end{align}
We use a Monte Carlo method to approximate the $L^2$ norms and compute the penalty terms as: 
\begin{align*}
    & L^{(HJB)}(u_{\theta}, m_{\eta})  = \frac{1}{M} \sum_{k=1}^{M}  \Big\lvert - \partial_t u_{\theta}(t_k, x_k) - \frac{1}{2} \Delta  u_{\theta}(t_k, x_k) + \frac{1}{2} \lvert \nabla  u_{\theta}(t_k, x_k) \rvert^2 - F[m_{\eta}](x_k) \Big\rvert^2, \\
    & L^{(KFP)}(u_{\theta}, m_{\eta})  = \frac{1}{M} \sum_{k=1}^{M}  \Big\lvert \partial_t m_{\eta}(t_k, x_k)- \frac{1}{2} \Delta m_{\eta}(t_k, x_k) - \operatorname{div} \Big(m(t_k, x_k) \nabla  u_{\theta}(t_k,x_k) \Big)  \Big\rvert^2, \\
    & L^{(init)}(u_{\theta}, m_{\eta})  = \frac{1}{M} \sum_{k=1}^{M}  \Big\lvert  m_{\eta}(0, x^{i}_k) - \rho_0(x^{i}_k) \Big \rvert^2,  \\ 
    & L^{(term)} (u_{\theta}, m_{\eta})  = \frac{1}{M} \sum_{k=1}^{M}  \Big \lvert u_{\theta}(T, x^{t}_k) - G(x^t_k, m_{\eta}(T,x^t_k) \Big \rvert^2,  \\ 
    & L^{(norm)}(u_{\theta}, m_{\eta})  = \Big \lvert \frac{1}{M} \sum_{k=1}^M u_{\theta}(t_k,x_k) \Big\rvert + \Big\lvert \frac{1}{M} \sum_{k=1}^M  m_{\eta}(t_k,x_k) - 1 \Big\rvert, \\
    & L^{(period)}(u_{\theta}, m_{\eta})  =  \frac{1}{M} \sum_{l=1}^M \sum_{k=1}^d \lvert u_{\theta}(t,(x_{l}^{-k}, 0)) - u_{\theta}(t, (x_{l}^{-k}, 1) ) \rvert^2  + \frac{1}{M} \sum_{l=1}^M \sum_{k=1}^d  \lvert m_{\eta}(t,(x_l^{-k}, 0)) - m_{\eta}(t, (x_l^{-k}, 1) ) \rvert^2 . 
\end{align*}

The pseudo-code is described in Algorithm \ref{algo:1}.  


\subsection{Accelerated Version of the Deep Galerkin Method for the Finite Horizon PDE System} 

In addition to the terms in the baseline DGM version,  we incorporate the turnpike estimates \eqref{eq:lin_tp_u} and \eqref{eq:lin_tp_m} into the loss function by considering : 
\begin{align*}
    \mathcal{L}^{acc}(u, m) = & \, \,  \mathcal{L}(u, m) 
 +  C^{(TPK)}_u  \mathcal{L}^{(TPK)}_u (u, m) + C^{(TPK)} _m \mathcal{L}^{(TPK)}_m (u, m), 
\end{align*}
where 
\begin{align}
      & \mathcal{L}^{(TPK)}_u(u, m)  = \int_{\delta T}^{(1-\delta)T}  \Big\lVert u(t,\cdot) - \langle u(t,\cdot) \rangle - \overline{u} \Big\rVert_{L^1(\mathbb{T}^d)}  \cdot (T-t) \cdot  \Big(e^{-\omega t} + e^{-\omega (T-t)}\Big)^{-1}  dt , \\
    & \mathcal{L}^{(TPK)}_m(u, m)  = \int_{\delta T}^{(1-\delta)T}  \Big\lVert m(t,\cdot) - \overline{m} \Big\rVert_{L^1(\mathbb{T}^d)} \cdot t \cdot \Big(e^{-\omega t} + e^{-\omega (T-t)}\Big)^{-1}  dt. 
\end{align}

We apply the DGM described above with the new loss functional $$L^{acc}(u_{\theta},m_{\eta}) = L(u_{\theta},m_{\eta}) + C^{(TPK)}_u L^{(TPK)}_u(u_{\theta},m_{\eta}) +  C^{(TPK)}_m L^{(TPK)}_m(u_{\theta},m_{\eta})$$
where, by denoting $M_t$ the number of sampled times and $M_x$ the number of sampled space points, 
\begin{align}
      & L^{(TPK)}_u(u_{\theta}, m_{\eta})  = \frac{1}{M_t} \sum_{k=1 : t_k \in [\delta T, (1-\delta)T]}^{M_t}  \frac{T-t_k}{M_x} \sum_{l=1}^{M_x} \Big\lvert u_{\theta}(t_k,x_l) - \langle u_{\theta}(t_k,x_l) \rangle - \overline{u}(x_l) \Big\rvert \cdot \Big(e^{-\omega t_k} + e^{-\omega (T-t_k)}\Big)^{-1} , \\  
    & L^{(TPK)}_m(u_{\theta}, m_{\eta})  = \frac{1}{M_t} \sum_{k=1: t_k \in [\delta T, (1-\delta)T]}^{M_t}  \frac{t_k}{M_x} \sum_{l=1}^{M_x} \Big\lvert m_{\eta}(t_k,x_l) - \overline{m}(x_l) \Big\rvert \cdot \Big(e^{-\omega t_k} + e^{-\omega (T-t_k)}\Big)^{-1} . 
\end{align}

\begin{algorithm}[ht]
    \begin{algorithmic}
        \Require Initialize the parameters of the neural networks and the learning rate $\alpha$. Maximum number of iterations $N$ and mini-batches of size $M$. 
        \For{n = 1, \dots, N} 
        \State Generate a training sample from the time interval and the domain's interior, say $\Omega$, initial and final conditions, i.e. : 
        \begin{itemize}
            \item Sample $\{(t_k, x_k)\}_{k=1}^M$ from $[0,T] \times \Omega$ according to certain distribution,
            \item Sample $\{x^{i}_k\}_{k=1}^M$ from $\Omega$ according to uniform distribution,
            \item Sample $\{x^{t}_k\}_{k=1}^M$ from $\Omega$ according to uniform distribution. 
        \end{itemize}
        \State Compute the loss functional $L(u_{\theta}, m_{\eta})$ from \eqref{eq:disc_loss} for the mini-batch $s_n = \{(t_k, x_k), x^{i}_k, x^{t}_k\}_{k=1}^M$. 
        \State Similarly, generate a validation sample and compute the corresponding loss functional. 
        \State Take a gradient descent step for $\theta$ and $\eta$ at the point $s_n$:  
        $$ \theta_{n+1} = \theta_n - \alpha \nabla_{\theta}L(u_{\theta}, m_{\eta}),$$
        $$ \eta_{n+1} = \eta_n - \alpha \nabla_{\eta}L(u_{\theta}, m_{\eta}).$$
        \EndFor
        \State 
    \end{algorithmic}
    \caption{Pseudo-code of the DGM algorithm for a finite-horizon MFG}
    \label{algo:1}
\end{algorithm}

\vskip 1 \baselineskip

\subsection{Implementation} 

As in \cite{SIRIGNANO20181339}, we use the Adam optimizer for the stochastic gradient descent. The estimation of the first-order and second-order moments of the gradients enables a dynamical adjustment of the learning rate for each parameter. The global learning rate schedule is chosen as a linear decay from initial rate of $10^{-2}$ to $10^{-5}$ in 300,000 iterations. The exponential decay rates of the moment estimation are $\beta_1 = 0.9$, $\beta_2 = 0.999$ and the numerical stability constant is $\epsilon = 10^{-7}$. The neural networks are initialized with the Xavier method so that the variance of the activations are identical across all layers, thus preventing the gradients from exploding or vanishing. The hyperparameters for the cost function take the values described in Table \ref{tab:hyperparams}.  

\begin{table}[ht]
    \centering
    \begin{tabular}{|c|c|c|c|c|c|c|c|c|c|c|}
    \hline 
        Algorithm & $C^{(HJB)}$ & $C^{(KFP)}$ & $C^{(init)}$ & $C^{(term)}$ & $C^{(norm)}$ & $C^{(period)}$ & $C^{(TPK)}_u$ & $C^{(TPK)}_m$ & $\delta$ \\
        \hline 
        DGM & 50 & 1 & 100 & 600  & 50 & 25 & 0 & 0 & 0 \\
        \hline 
        DGM-TP & 50 & 1 & 100 & 600  & 50 & 25 & 10 & 100 & 0.1 \\
        \hline 
    \end{tabular}
    \caption{Values of the hyperparameters}
    \label{tab:hyperparams}
\end{table}

The difficulty of tuning the hyperparameters has been underlined in \cite{carmona_convergence_2021}. Giving too much importance to one of the loss terms can lead to sub-optimal solutions that only minimize one penalization term while disregarding the other terms. A noteworthy point is the fact that the hyperparameters for the penalty terms common to both versions have been fine-tuned based on the performance for the baseline algorithm. To enable a relatively fair comparison we only fine-tune the turnpike penalty costs $C^{(TPK)}_u$ and $C^{(TPK)}_m$ for the turnpike-accelerated method. As for the neural networks, we use an architecture with $2$ hidden layers and a width equal to $100$. Despite the small number of hidden layers, we continue to use the terminology \emph{deep} to conform to what is used in the existing literature on the subject. The activation functions are chosen to be \textit{sigmoid}, except for the output layers which are set to be the \textit{identity} for the value function neural network and \textit{exponential} for the density neural network. The training relies on the sampling of new mini-batches at each SGD iteration with 10,240 points in time and space (in particular $M_t = 10$ for time and $M_x = 1,024$ for space) for $t \in (0,T)$ and 1,024 points for the initial and terminal conditions. The mini-batches are generated by : 
\begin{itemize}
    \item Sampling the time points on $[0,T]$ according to a scaled Beta distribution $\operatorname{Beta}(a,b)$ with parameters $a=b=0.5$. This has been found to be empirically better than a uniform sampling distribution : intuitively, this comes from the forward-backward nature of the PDE system and the fact that the initial and final conditions need to be learned for the neural networks to not learn a trivial solution minimizing only the PDE residuals while disregarding the initial and terminal condition penalties. 
    \item Sampling the space points uniformly on $[0,1]$ for the initial time $t=0$, the terminal time $t = T$ and the intermediate times in $(0,T)$. 
\end{itemize}

The convergence of the DGM method is ensured by monitoring the convergence of each validation penalty term towards zero. 

\begin{figure}[hbt!]
    \centering
    \begin{subfigure}{0.45\textwidth}
        \includegraphics[width = \textwidth]{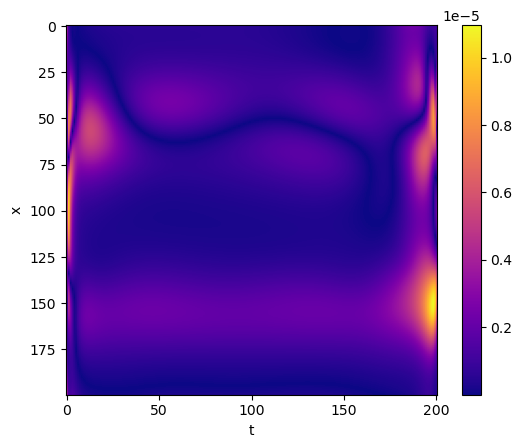}
    \end{subfigure}
    ~ 
    \begin{subfigure}{0.45\textwidth}
        \includegraphics[width = 0.975\textwidth]{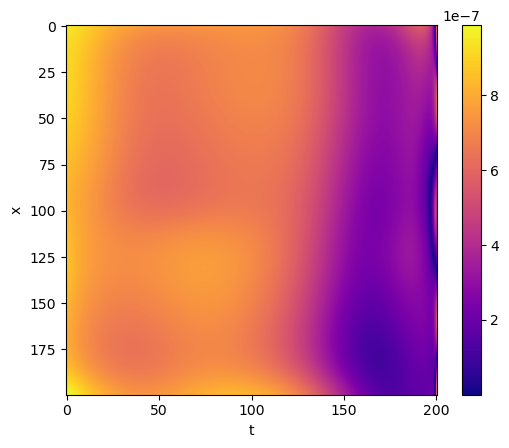}
    \end{subfigure}
    \caption{ Relative absolute error for the density (on the left) and the value function (on the right). The axes correspond to the index on a uniform grid of size 200 for time in $[0,T]$ and space in $[0,1]$.}
    \label{fig:heatmap_linear}
\end{figure}

\subsection{Numerical Results and Analysis}

This subsection presents the solutions obtained for the different methods considered so far : the Finite Difference Method (FDM), the baseline Deep Galerkin Method (DGM) and the turnpike-accelerated DGM method (DGM-TP).

\vskip 1 \baselineskip

To illustrate the performance of the DGM-TP version, we give in Figure \ref{fig:heatmap_linear} heat maps of the relative error of the DGM-TP solution with respect to the finite difference solution (considered as the exact solution) evaluated on a uniform $200 \times 200$ grid of the domain $[0,T]\times Q$.  For the density (left pane of Figure \ref{fig:heatmap_linear}), the errors are concentrated on the transient phases of the turnpike phenomenon, namely the initial and terminal intervals $[0, \delta T]$ and $[(1-\delta)T,T]$, while there is little error in the long middle phase, which corresponds to being on the turnpike. For the value function (right pane of Figure \ref{fig:heatmap_linear}), the size of the relative error is orders of magnitude smaller, and the best approximation is achieved at the terminal time, which is quite natural because of the enforcement of the terminal condition through the corresponding penalty $L^{(term)}$. The approximation error is higher away from the terminal time.

\begin{figure}[hbt!]
    \centering
    \begin{subfigure}{0.45\textwidth}
        \includegraphics[width = \textwidth]{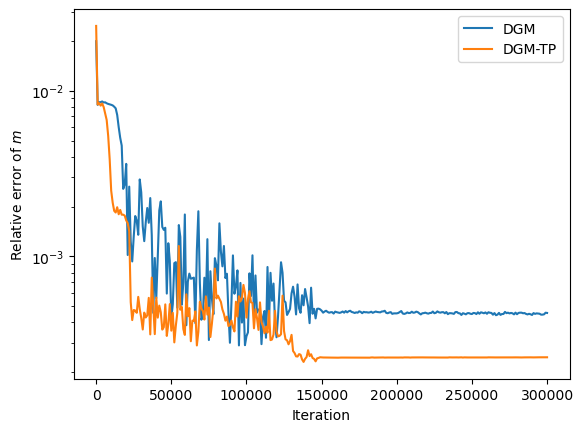}
    \end{subfigure}
    ~ 
    \begin{subfigure}{0.45\textwidth}
        \includegraphics[width = \textwidth]{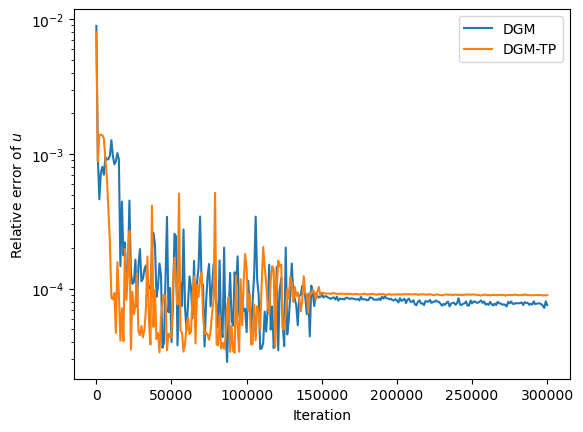}
    \end{subfigure}
    \caption{Relative absolute error for the density (left) and the value function (right) with respect to the training iteration index.}
    \label{fig:SGD_linear}
\end{figure}

\vskip 1 \baselineskip

We compare the performance between the DGM and DGM-TP neural network predictions on the same $200 \times 200$ grid with respect to the SGD iterations in Figure \ref{fig:SGD_linear}. The turnpike-accelerated method brings improvement to the approximation of the density, while there is no significant gain for the value function. This can be explained by the fact that the turnpike estimates involve the exact density function, while the one for the value function involves a mean-adjusted version, so the first estimate brings in more information about the system than the simple PDE equations. In particular, while monitoring the training of the value function, we notice that without any prior turnpike information, the baseline DGM neural network actually learns the shape of the correct value function quite well (after a relatively low number of iterations). As a result, adding the turnpike penalty to the optimization does not bring significant improvement for the approximation of the value function. 

\vskip 1 \baselineskip

\begin{figure}[hbt!]
    \centering 
\begin{subfigure}{0.3\textwidth}
  \includegraphics[width=\linewidth]{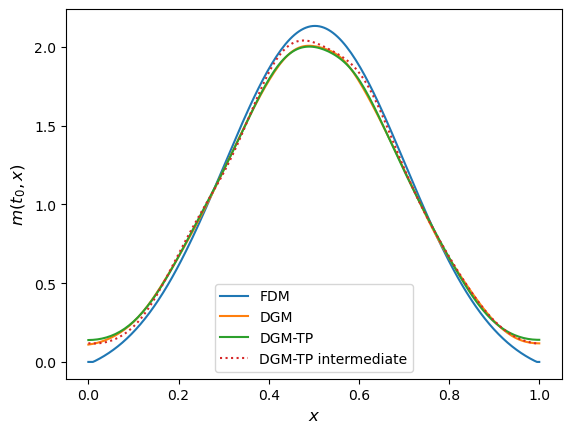}
\end{subfigure}\hfil 
\begin{subfigure}{0.3\textwidth}
  \includegraphics[width=\linewidth]{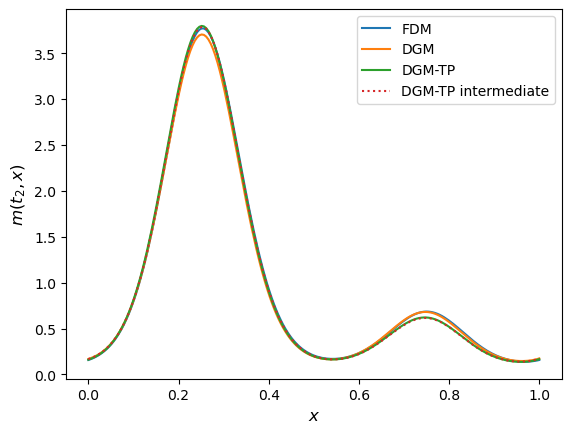}
\end{subfigure}\hfil 
\begin{subfigure}{0.3\textwidth}
  \includegraphics[width=\linewidth]{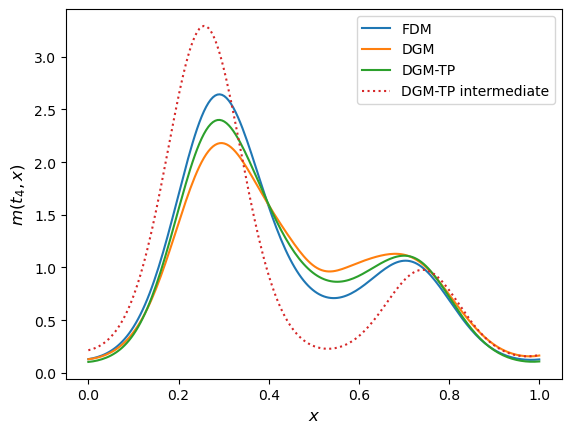}
\end{subfigure}

\medskip
\begin{subfigure}{0.3\textwidth}
  \includegraphics[width=\linewidth]{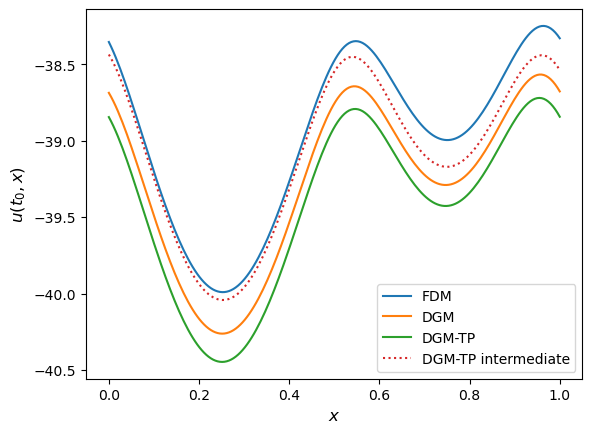}
\end{subfigure}\hfil 
\begin{subfigure}{0.3\textwidth}
  \includegraphics[width=\linewidth]{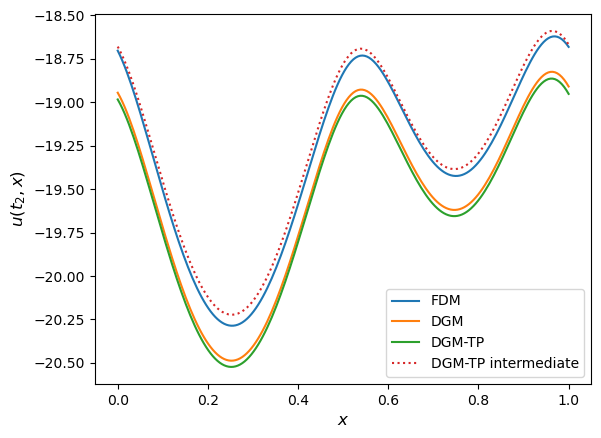}
\end{subfigure}\hfil 
\begin{subfigure}{0.3\textwidth}
  \includegraphics[width=\linewidth]{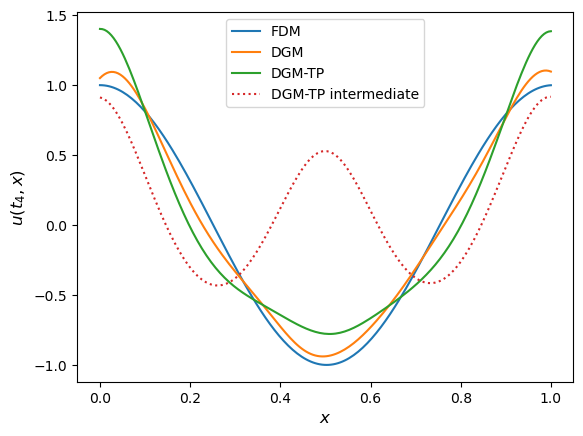}
\end{subfigure}
\caption{Plots of the density (top) and the value function (bottom) at three different times : $t_0 = 0$, $t_2 = T/2$, $t_4 = T$. The solutions obtained by Finite Differences appear in blue while the orange and green plots correspond to the iteration $i = 300,000$ of the DGM and DGM-TP methods respectively. The red-dotted plot corresponds to the DGM-TP solution at an intermediate iteration $i = 50,000$ (before full convergence of the algorithm). } 
\label{fig:main_linear_plots}
\end{figure}

Figure \ref{fig:main_linear_plots} displays the graphs of the numerical solutions at different times $t \in \{0, \frac{T}{2}, T\}$. Both DGM and DGM-TP solutions are consistent with the solution provided by the Finite Difference method. The density plots show that the DGM-TP density is closer to the FDM one, especially near the terminal time. Both DGM and DGM-TP value functions are close to each other, though slightly away from the FDM value function. The slight difference between them seems to stem from the difference in approximation performance at the terminal time and a progressive accumulation as the time moves to the initial time. The terminal approximation error may be caused by the introduction of additional turnpike terms. 
We produce an intermediate plot of the solution at an early training stage (iteration 50,000) of the DGM-TP algorithm
to show that the convergence of the iterative optimization is fast. We indeed have a very precise idea of the density and the value function, except at the terminal time $T$. At the terminal time, the red-dotted line is very far from the terminal condition in blue. It will take time to learn this information and for the red-dotted solution to slowly morph into the final DGM-TP solution in green, at the cost of less precision on the rest of the time interval. 

\section{\textbf{A New Class of Linear-Quadratic MFG Models with the Turnpike Property}}
 \label{section:4}
Rather than aiming at the greatest possible generality, we introduce a  class of simple models which can be solved explicitly, allowing us to identify the turnpike property readily.

\subsection{State Dynamics and Instantaneous Costs} 
For the sake of simplicity, we assume that the generic agent controls the dynamics of their own state living in $\mathbb{R}$: 
\begin{equation}
    \begin{cases}
    & dX_t = -\alpha_t dt + \sigma dW_t, \\
    & X_0 = \xi \sim m_0 ,
    \end{cases}
\end{equation}
where the control process $\balpha=(\alpha_t)_{t\ge 0}$ takes values in a closed convex subset $A \subset \mathbb{R}$, $\bW=(W_t)_{t\ge 0}$ is a given one-dimensional Brownian motion, and $m_0\in\cP(\RR)$ is the initial distribution of the population of agents. 
As in Section \ref{section:2}, we assume that the running cost has a separablee structure, i.e.: 
$$
L(x,\alpha, m) = \frac{1}{2} \lvert \alpha \rvert^2 + F[m](x),
$$ 
where $L: \mathbb{R} \times A \times  \mathcal{P}(\mathbb{R}) \mapsto \mathbb{R} $ is assumed to be continuous, and where the operator $F[m]$ is here associated to a non-local coupling:
$$
F[m](x) = f(x,m) = qx^2  + \beta x \cdot \int ydm(y) + \gamma \Big( \int y dm(y) \Big)^2.
$$

\subsection{Explicit Solution of the Ergodic MFG Problem} 
In the ergodic version of the problem, the generic agent wants to minimize the following ergodic cost: 

\begin{equation}
    \mathcal{J} (x,\alpha) = \underset{T \rightarrow + \infty}{\lim \inf} \, \frac{1}{T}
 \mathbb{E} \Big[ \int_0^T L(X_t, \alpha_t, m_t)dt \Big],
\end{equation}
where the flow $\bm=(m_t)_{t\ge 0}$ of probability measure is stationary in the sense that for every $t\ge 0$, $m_t=m$ for some $m \in \mathcal{P}(\mathbb{R})$, and the control process $\balpha$ is also stationary, namely given by a fedback function independent of time, i.e. $\alpha_t=\alpha(X_t)$ for all $t\ge 0$.
Using the notation $\nu := \sigma^2/2$, the value function $v(x)=\inf_{\balpha} \mathcal{J}(x,\balpha)$ and the stationary distribution $m$ are found by solving the ergodic PDE system becomes: 
 \begin{subequations} \label{eq:eLQMFG}
    \begin{empheq}[left={(LQMFG-e):= \empheqlbrace}]{align}
& - \nu \Delta v + \frac{1}{2} \vert \nabla v \rvert^2 - \lambda =  F[m](x) , \\
& \nu \Delta m + \operatorname{div}(\nabla v \cdot m) = 0,  \\
& \min v(x)  = 0, \quad  \int_{\mathbb{T}} m(x)dx = 1, \, m>0. 
    \end{empheq}
    \end{subequations}

The special form of our model was chosen to appeal to an existing solution of the problem. The following result was proven by Bardi as Theorem 4.1 in \cite{Bardi_explicit_sol}. In the current setting, it takes the simple form : 
\begin{theorem}
    Assuming that $\overline{q}>0$ and $2 \overline{q} \neq -\beta $. Then, 
    \begin{itemize}
        \item the system \eqref{eq:eLQMFG} has exactly one solution $(\lambda, v,m)$ of the quadratic-Gaussian form : 
        \begin{subequations}
    \begin{empheq}[left=\empheqlbrace]{align}
&  v(x) = \frac{(x-\mu)^2}{2s},  \quad m(x) = \frac{1}{\sqrt{ \pi s \sigma^2 }} \exp \Big( - \frac{(x-\mu)^2 }{s \sigma^2} \Big), \quad s = \frac{1}{\sqrt{2q}}, \quad \mu = 0,\\ 
& \overline{\alpha}(x) = \frac{x - \mu}{s}, \quad  J(x, \overline{\alpha} ) = \lambda, \quad  \lambda = \frac{\nu}{s}- \frac{\mu^2}{2s^2} + \gamma\mu^2,
\end{empheq}
\end{subequations}

\item if in addition $\beta \geq 0$, then it is the unique solution of \eqref{eq:eLQMFG}. 
    \end{itemize}
\end{theorem}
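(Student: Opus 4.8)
The plan is to proceed by a direct ansatz-and-match argument for existence and for in-class uniqueness, and then to invoke the Lasry--Lions monotonicity machinery (as packaged in Bardi's Theorem~4.1) for the global uniqueness under $\beta \ge 0$. First I would posit $v(x) = \frac{(x-\mu)^2}{2s}$ with unknowns $s>0$ and $\mu \in \mathbb{R}$, together with a Gaussian density $m$. Since $\nabla v = (x-\mu)/s$ is affine, the second equation of \eqref{eq:eLQMFG} is precisely the stationary Fokker--Planck equation of an Ornstein--Uhlenbeck process with drift $-(x-\mu)/s$ and diffusion $\sigma^2$; its unique normalized stationary solution is Gaussian with mean $\mu$ and variance $\nu s$, which is exactly the claimed $m$. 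This disposes of the KFP equation automatically and, crucially, gives the closed form $\int y\, dm(y) = \mu$ for the mean appearing in the coupling.

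Next I would substitute the ansatz into the HJB equation. Using $\Delta v = 1/s$, $\frac{1}{2}|\nabla v|^2 = (x-\mu)^2/(2s^2)$ and $\int y\, dm(y) = \mu$, both sides become quadratic polynomials in $x$, so the equation reduces to matching the coefficients of $x^2$, $x^1$ and $x^0$. The $x^2$ balance gives $1/(2s^2) = q$, hence $s = 1/\sqrt{2q}$, so that $\overline q = q > 0$ is exactly what makes $s$ real and positive. The $x^1$ balance reads $-\mu/s^2 = \beta \mu$, i.e. $\mu(2\overline q + \beta) = 0$, and the hypothesis $2\overline q \neq -\beta$ forces $\mu = 0$. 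The remaining constant balance then determines $\lambda$ and yields the stated value. Since these three scalar identifications leave no freedom in $(s, \mu, \lambda)$, they simultaneously establish existence of a quadratic-Gaussian solution and its uniqueness within that class.

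For the last assertion I would check that the coupling $F[m](x) = qx^2 + \beta x \int y\, dm(y) + \gamma (\int y\, dm(y))^2$ satisfies the Lasry--Lions monotonicity inequality exactly when $\beta \ge 0$. Writing $\overline m_i = \int y\, dm_i(y)$, in $\int (F[m_1] - F[m_2])\, d(m_1 - m_2)$ the $qx^2$ term drops out (it is independent of $m$) and the $\gamma \overline m^2$ term drops out (it is constant in $x$ and integrates against a signed measure of total mass zero), leaving $\beta(\overline m_1 - \overline m_2)^2$. Hence $F$ is monotone iff $\beta \ge 0$, which is precisely the extra hypothesis, and the standard Lasry--Lions uniqueness argument -- the content of Bardi's Theorem~4.1 specialized to this model -- excludes any other solution, Gaussian or not.

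I expect the main obstacle to be this global uniqueness step rather than the construction: the coefficient matching is elementary once the Ornstein--Uhlenbeck stationary law is identified, but ruling out non-Gaussian solutions cannot be done by the ansatz and genuinely needs the monotonicity structure. The delicate point is that the monotonicity obtained above is only non-strict, since $\beta(\overline m_1 - \overline m_2)^2$ vanishes as soon as the two candidate distributions share the same mean; one therefore has to check that the Lasry--Lions argument still closes, using the strict convexity of the Hamiltonian $p \mapsto \frac{1}{2}|p|^2$ to force $\nabla v_1 = \nabla v_2$ through the weighted gradient term. This is exactly why the cited theorem is invoked, and why the borderline case $2\overline q = -\beta$ must be excluded in the existence and in-class uniqueness part.
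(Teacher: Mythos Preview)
The paper does not contain its own proof of this statement: it is quoted verbatim as ``Theorem 4.1 in \cite{Bardi_explicit_sol}'' and no argument is reproduced. Your proposal is correct and is precisely the approach used by Bardi --- posit a quadratic value function, read off the Ornstein--Uhlenbeck stationary Gaussian from the KFP equation, match polynomial coefficients in the HJB equation to pin down $(s,\mu,\lambda)$, and then invoke Lasry--Lions monotonicity (which you correctly reduce to $\beta(\overline m_1-\overline m_2)^2\ge 0$) for global uniqueness. Your remark that the monotonicity is only non-strict and that closing the argument relies on the strict convexity of $p\mapsto\tfrac12|p|^2$ is exactly the subtlety Bardi handles, so nothing is missing.
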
 

\subsection{Finite Horizon Linear Quadratic MFG} 
In the finite horizon form of the model, for a finite horizon $T>0$, we minimize the expected cost:
\begin{equation}
    J (x,\alpha) = 
 \mathbb{E} \Big[ \int_0^T L(X_t, \alpha_t, m_t)dt + g(T,x_T,m_T)\Big],
\end{equation}
where the flow $\bm=(m_t)_{0\le t\le T}$ is now time dependent, and expected to coincide  in equilibrium with the distribution of the marginal distribution of the state, i.e. $m_t=\mathcal{L}(X_t)$. The terminal cost function $g$  is part of the data of the model. 

For the sake of definiteness, we choose a Gaussian initial distribution $m_0 = \mathcal{N}( \mu_0, \sigma_0)$, and a terminal cost function independent of the distribution, and penalizing the distance of the terminal state from a given value $r$, to be specific, $g(T,x_T,m_T) = \Psi (x-r)^2$. 

\vskip 2pt
The strategy to solve such a model is well known. See for example \cite[Section 3.5]{CarmonaDelarue_book_I}. We recall the main steps to derive the turnpike property.
The Hamiltonian takes the form: 
\begin{equation*}
H(x,m,p) = \underset{a}{\max} -  f(x,a,m) - b(x,a,m) \cdot p,
\end{equation*}
where, for deterministic constants $B$ and $Q$ to be determined below, 
\begin{equation*}
f(x,a,m) = \frac{1}{2}\Big( Q x^2 + B \Big(x- \int \xi m(\xi) d\xi\Big)^2 + a^2\Big), \quad \text{and}\quad b(x,a,m) = - a.
\end{equation*}
Hence, the first order optimality conditions provides: $a^* = p$, so that:
\begin{align*}
    & H(x,m,p) = \frac{1}{2} \Big(Q x^2 + B (x- \int \xi m(\xi) d\xi)^2\Big)   + \frac{1}{2} p^2,. 
\end{align*}
and $H_p(x,m,p) = p$, so that the HJB equation can therefore be written as: 
\begin{align*}
    & - \partial_t u(t,x) - \frac{\sigma^2}{2} \Delta u(t,x) + \frac{1}{2} \lvert \nabla u(t,x) \rvert^2 = \frac{1}{2} (Q x^2 + B(x-\int \xi m(\xi) d\xi)^2), \\
    & u(T,x)  = \Psi (x-r)^2. 
\end{align*}

We can then use the following ansatz $u(t,x) = \frac{1}{2} \phi_t x^2 + \chi_t x + \psi_t$ with  functions $\phi$, $\chi$,  and $\psi$ from $[0,T]$ to $\mathbb{R}$ to be determined. As the HJB equation only depends on the mean field through its mean $\mu_t=\int \xi m_t(\xi) d\xi$, we can in fact simplify the problem by reducing the KFP equation to an ODE for this mean. We get: 
$$
    \frac{d \mu}{dt} - \int m(t,x) H_p(x, m(t), \nabla u(t,x))) dx = 0,
$$
which is easily seen to be equivalent to 
$$
\frac{d \mu}{dt}  - (\phi_t  \mu_t + \chi_t)  = 0,
$$
by using the normalization condition. The PDE system thus simplifies to the following ODE system : 

 \begin{subequations}
    \begin{empheq}[left=\empheqlbrace]{align}
&  \frac{d \mu}{dt}  - (\phi_t  \mu_t + \chi_t)  = 0, & \mu_0 = m_0 , \\
& - \frac{d \phi}{dt} = - \phi_t^2 + Q + B, & \phi_T = 2 \Psi, \label{eq:riccati} \\ 
& - \frac{d \chi}{dt} = - \phi_t \chi_t - B \mu_t , & \chi_T = 2 \Psi r, \\
& - \frac{d \psi}{dt} = \frac{\sigma^2}{2} p_t -\frac{1}{2} \chi_t^2 + \frac{1}{2} B \mu^2_t , & \psi_T = \Psi r^2.  
\end{empheq}
\end{subequations}

\subsection{The Turnpike Estimates}

Equation \eqref{eq:riccati} is a Riccati equation, whose solution is given (after simplification) by: 
\begin{equation}
\phi_t = \sqrt{C} + \frac{2 \sqrt{C}(\sqrt{C} - \gamma)}{\sqrt{C} (e^{2 \sqrt{C}(T-t)} +1 ) + \gamma (e^{2 \sqrt{C}(T-t)} - 1 )} ,
\end{equation}
where $C = Q + B$, $\gamma = 2 \Psi$. 
We therefore have : 
\begin{align*}
    \lvert \phi_t - \sqrt{C} \rvert  = \frac{2 \sqrt{C}\lvert \sqrt{C}-\gamma\rvert}{\lvert \sqrt{C} (e^{2 \sqrt{C}(T-t)} +1 ) + \gamma (e^{2 \sqrt{C}(T-t)} - 1 ) \rvert} .
\end{align*}
Since  $2 \sqrt{C} (T-t) \geq 0$ for $t \in [0,T]$, we obtain that the denominator which we denote by $D$ is positive, and that $\lvert D \rvert \geq \sqrt{C} e^{2 \sqrt{C}(T-t)}$. Hence: 
\begin{equation}
    \lvert\phi_t - \sqrt{C} \rvert \leq 2 \lvert \sqrt{C}-\gamma\rvert e^{-2 \sqrt{C}(T-t)}, \quad t \in [0,T]
    \label{eq:LQ-TP-phi}.
\end{equation}
We rewrite the equations of $\mu$ and $\chi$ as a linear system: 

\begin{align}
    \dot{X}_t = A_t X_t, \quad  X_t = \begin{pmatrix}
        \mu_t \\ \chi_t
    \end{pmatrix}, \quad  A_t = \begin{pmatrix}
        - \phi_t & -1 \\ B & \phi_t
    \end{pmatrix}. 
\end{align}
The initial-terminal conditions make the system a forward-backward problem. Instead of solving the backward equation, we use a shooting method to find the right initial condition for $\chi_t$ to ensure that $\chi_T = 2\Psi r$. This is now a simple ODE system with a non-constant matrix $A_t$ that generally does not commute (indeed, for $\phi$ non identically constant and $t \neq s$, $A_t A_s \neq A_s A_t$), hence we do not have any closed-form expression for the solution. The case where $\sqrt{C} = \gamma$ is the only one for which we have such an explicit solution, and for the sake of deriving solutions from explicit formulas which we can compare to the results of our numerical algorithms, we shall restrict ourselves to this case from now on.
The ODE system becomes: 
\begin{align}
    \dot{X}_t = A X_t, \quad  A := \begin{pmatrix}
        - \sqrt{C} & -1 \\ B & \sqrt{C}
    \end{pmatrix}. 
\end{align}
Since $Q>0$, then $A$ has two distinct real eigenvalues $\lambda_1 = - \sqrt{C - B}$, $\lambda_2 = + \sqrt{C - B}$ with respective eigenvectors $v_1 = (- \frac{\sqrt{C} + \sqrt{C-B}}{B} ,1)^{\dagger}$ and $v_2 = (- \frac{\sqrt{C} -\sqrt{C-B}}{B} ,1)^{\dagger}$. The solution of the system can therefore be expressed as :
$ X_t = e^{Mt} X_0$ and the system has the phase diagram depicted in Figure \ref{fig:phase_portrait}.  

By a straightforward calculation, the correct initial condition to ensure $\chi_T = 2 \Psi r$ is given by : 
\begin{equation}
    c = \frac{2 \Psi r + (e^{\lambda_1 T } - e^{\lambda_2 T }) \frac{B}{2 \sqrt{C-B)}} \mu_0 }{0.5\cdot e^{\lambda_1 T} (1 - \frac{\sqrt{C}}{\sqrt{C-B)}} )- 0.5\cdot e^{\lambda_2 T } (1 +\frac{\sqrt{C}}{\sqrt{C-B)}} )}.
\end{equation}
We note that for $T$ large enough, $c \sim - \frac{B}{\sqrt{C-B} + \sqrt{C}} \mu_0$, so $c \cdot \mu_0 < 0$. Thus, we are interested in initial points located in the second and fourth quadrants of the phase diagram. We also notice that for $T$ large enough, the initial point $(\mu_0, c)$ will actually be in the vicinity of the linear function directed by $v_1$ and will then lead to a trajectory that ends up in the vicinity of the linear function directed by $v_2$. 
We can therefore infer that: 
\begin{equation}
    \lvert \mu_t \rvert + \lvert \chi_t \rvert = O \Big( e^{-\sqrt{C-B} t} + e^{-\sqrt{C-B} (T-t)} \Big), \quad \forall t \in [0,T]
    \label{eq:LQ-TP-mu-chi}
\end{equation}
where $ O(\cdot)$ is uniform with respect to $T$. 

\begin{figure}[hbt!]
    \centering
    \includegraphics[width = 7cm]{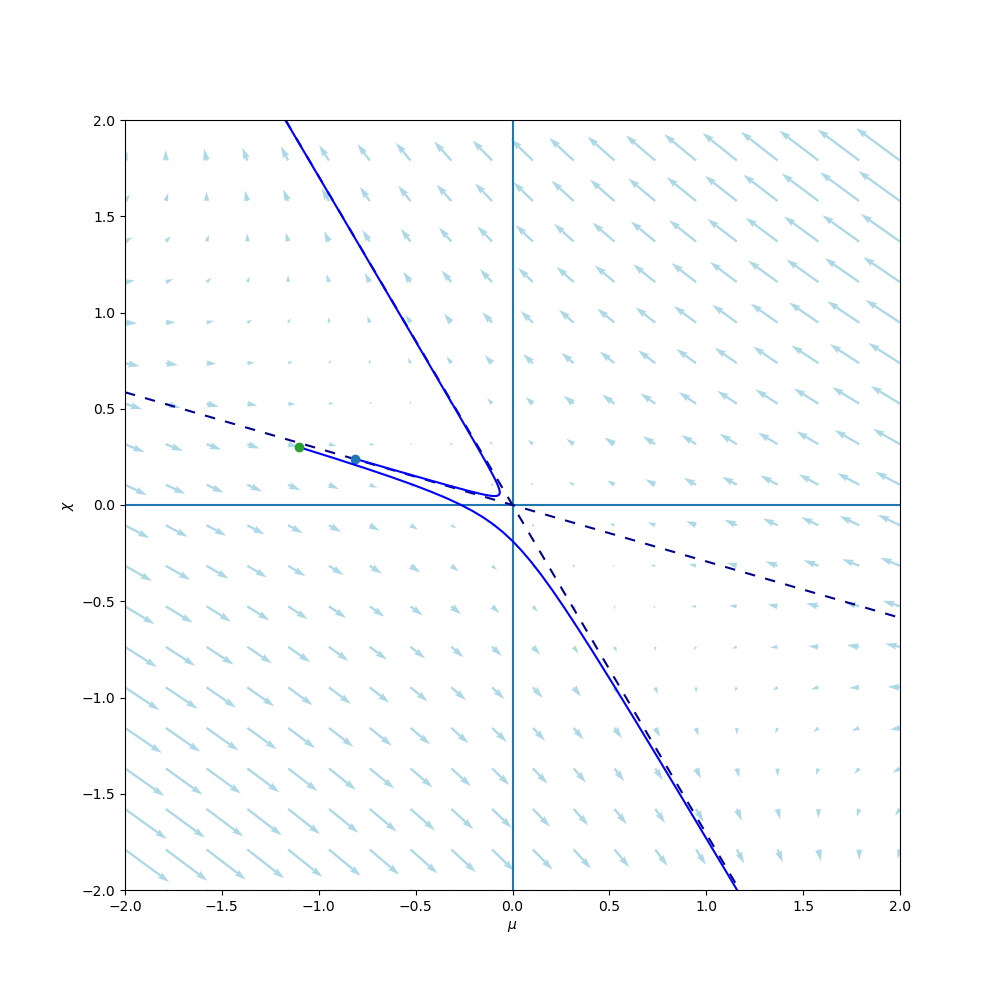}
    \caption{Phase plot of the ODE system for $(\mu, \chi)$ with two trajectories that start at the initial points marked by a circle. }
    \label{fig:phase_portrait}
\end{figure}

Matching the running cost coefficients of the running cost functions in the ergodic and finite horizon problems leads to $q = \frac{1}{2} (Q+B)$, $\beta = - B$ and $\gamma = \frac{1}{2} B$. Hence, the ergodic solution $\overline{u}$ satisfies $D\overline{u}(t,x) = \sqrt{C} x $. We have the following turnpike estimates:

\begin{proposition}
    Under the above assumptions,  for every $t\in [0,T]$,   
\begin{align}
 &  \int_x \lvert u(t,x) - u(t,0) - \overline u(x) \rvert \overline{m}(dx) \leq C \Big( e^{-\sqrt{C-B} t} + e^{-\sqrt{C-B} (T-t)}\Big),\\
   &  \int_x \lvert Du(t,x) - D\overline u(x) \rvert \overline{m}(dx) \leq C \Big( e^{-\sqrt{C-B} t} + e^{-\sqrt{C-B} (T-t)}\Big),\\
   & \lvert \mu_t \rvert  \leq C \Big( e^{-\sqrt{C-B} t} + e^{-\sqrt{C-B} (T-t)}\Big).
\end{align}

\end{proposition}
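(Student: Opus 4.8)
The plan is to exploit the fact that, in the regime to which we have restricted ourselves, namely $\sqrt{C}=\gamma$, the estimate \eqref{eq:LQ-TP-phi} has a vanishing right-hand side, so that $\phi_t\equiv\sqrt{C}$ for every $t\in[0,T]$. Recalling the ansatz $u(t,x)=\tfrac12\phi_t x^2+\chi_t x+\psi_t$ and that the ergodic value function is $\overline u(x)=\tfrac{\sqrt{C}}{2}x^2$ (so that $D\overline u(x)=\sqrt{C}\,x$ and $\overline u(0)=0$), this immediately collapses the three quantities to be estimated onto the single scalar $\chi_t$ and the mean $\mu_t$. Indeed, $u(t,x)-u(t,0)=\tfrac12\phi_t x^2+\chi_t x$, so with $\phi_t=\sqrt{C}$ the quadratic terms cancel against $\overline u$ and $u(t,x)-u(t,0)-\overline u(x)=\chi_t x$; likewise $Du(t,x)-D\overline u(x)=(\phi_t-\sqrt{C})x+\chi_t=\chi_t$.

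First I would integrate these two identities against the ergodic measure $\overline m$, which is the centred Gaussian of variance $s\sigma^2/2$ with $s=1/\sqrt{C}$. Since $\overline m$ is a probability measure with finite first absolute moment, this gives $\int_x|u(t,x)-u(t,0)-\overline u(x)|\,\overline m(dx)=|\chi_t|\int_x|x|\,\overline m(dx)$ and $\int_x|Du(t,x)-D\overline u(x)|\,\overline m(dx)=|\chi_t|$, the moment $\int_x|x|\,\overline m(dx)=\sqrt{s\sigma^2/\pi}$ being a finite constant that may be absorbed into the generic constant on the right-hand side. The third estimate requires nothing further, since its left-hand side is exactly $|\mu_t|$.

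Finally I would invoke the envelope estimate \eqref{eq:LQ-TP-mu-chi}, which bounds $|\mu_t|+|\chi_t|$, and a fortiori each of $|\mu_t|$ and $|\chi_t|$ separately, by $O\!\big(e^{-\sqrt{C-B}\,t}+e^{-\sqrt{C-B}\,(T-t)}\big)$ uniformly in $T$; substituting this into the three displays above yields all three inequalities at once. The genuine work is thus entirely contained in \eqref{eq:LQ-TP-mu-chi}, and this is where I expect the main obstacle to lie: making that bound rigorous requires decomposing the shooting initial datum $X_0=(\mu_0,c)^{\dagger}$ along the eigenvectors $v_1,v_2$ of $A$, checking that the component along the stable direction $\lambda_1=-\sqrt{C-B}$ produces the $e^{-\sqrt{C-B}\,t}$ term while the unstable component must be shown to be $O(e^{-\sqrt{C-B}\,T})$ so that $e^{\lambda_2 t}$ times it contributes the $e^{-\sqrt{C-B}\,(T-t)}$ term, together with a uniform-in-$T$ control of the shooting constant $c\sim-\tfrac{B}{\sqrt{C-B}+\sqrt{C}}\mu_0$. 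Outside the restricted regime $\sqrt{C}=\gamma$ one would additionally have to carry the $\phi$-contribution, bounded by \eqref{eq:LQ-TP-phi} at rate $2\sqrt{C}$, in the first two estimates, which fits under the stated rate $\sqrt{C-B}$ precisely when $2\sqrt{C}\ge\sqrt{C-B}$.
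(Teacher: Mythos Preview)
Your proposal is correct and follows essentially the same route as the paper: compute $u(t,x)-u(t,0)-\overline u(x)$ and $Du(t,x)-D\overline u(x)$ from the quadratic ansatz, integrate against $\overline m$ via the triangle inequality, and then invoke \eqref{eq:LQ-TP-phi} and \eqref{eq:LQ-TP-mu-chi}. The only difference is cosmetic: the paper retains the $(\phi_t-\sqrt{C})$ term in the computation and bounds it via \eqref{eq:LQ-TP-phi} (noting $2\sqrt{C}\ge\sqrt{C-B}$), whereas you observe up front that in the restricted regime $\sqrt{C}=\gamma$ this term vanishes identically, which is a legitimate simplification under the stated assumptions.
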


\begin{proof}
For the first inequality : 
\begin{align*}
    \int_x \lvert u(t,x) - u(t,0) - \overline u(x) \rvert \overline{m}(dx) & = \int_x \lvert \frac{1}{2} (\phi_t - \sqrt{C})x^2 + \chi_t x \rvert \overline{m}(dx) \\
   & \leq  \frac{1}{2} \lvert \phi_t - \sqrt{C} \rvert   \int_x x^2  \overline{m}(dx) + \lvert \chi_t \rvert \int_x x \overline{m}(dx) \\
    & \leq \frac{\sigma^2}{2} \cdot \lvert \phi_t - \sqrt{C} \rvert + \sqrt{\frac{2}{\pi}} \lvert \chi_t \rvert 
\end{align*}
by the triangular inequality. From \eqref{eq:LQ-TP-phi} and \eqref{eq:LQ-TP-mu-chi}, there exists a constant $K>0$ such that : 
\begin{align*}
    \int_x \lvert u(t,x) - u(t,0) - \overline u(x) \rvert \overline{m}(dx) & \leq K \cdot  \Big( e^{-\sqrt{C-B} t} + e^{-\sqrt{C-B} (T-t)}\Big) 
\end{align*}
For the second inequality : 
\begin{align*}
    \int_x \lvert Du(t,x) - D\overline u(x) \rvert \overline{m}(dx) & = \int_x \lvert (\phi_t - \sqrt{C})x + \chi_t \rvert \overline{m}(dx) \\
   & \leq  \lvert \phi_t - \sqrt{C} \rvert \int_x \lvert x  \rvert \overline{m}(dx) + \lvert \chi_t \rvert \int_x \overline{m}(dx) \\
    & \leq \sqrt{\frac{2}{\pi}} \cdot \lvert \phi_t - \sqrt{C} \rvert + \lvert \chi_t \rvert
\end{align*}
by the triangular inequality and the normalization condition. From \eqref{eq:LQ-TP-phi} and \eqref{eq:LQ-TP-mu-chi}, there exists a constant $K>0$ such that : 
\begin{align*}
    \int_x \lvert Du(t,x) - D\overline u(x) \rvert \overline{m}(dx) & \leq K \cdot  \Big( e^{-\sqrt{C-B} t} + e^{-\sqrt{C-B} (T-t)}\Big) 
\end{align*}
since $ 2 \sqrt{C} \geq \sqrt{C} \geq \sqrt{C-B}$. 

\end{proof}

Figure \ref{fig:turnpike_lqmfg} illustrates the turnpike property computed from the analytical solutions. 

\begin{figure}[hbt!]
    \centering
    \begin{subfigure}[b]{0.45\textwidth}
        \includegraphics[width = \textwidth]{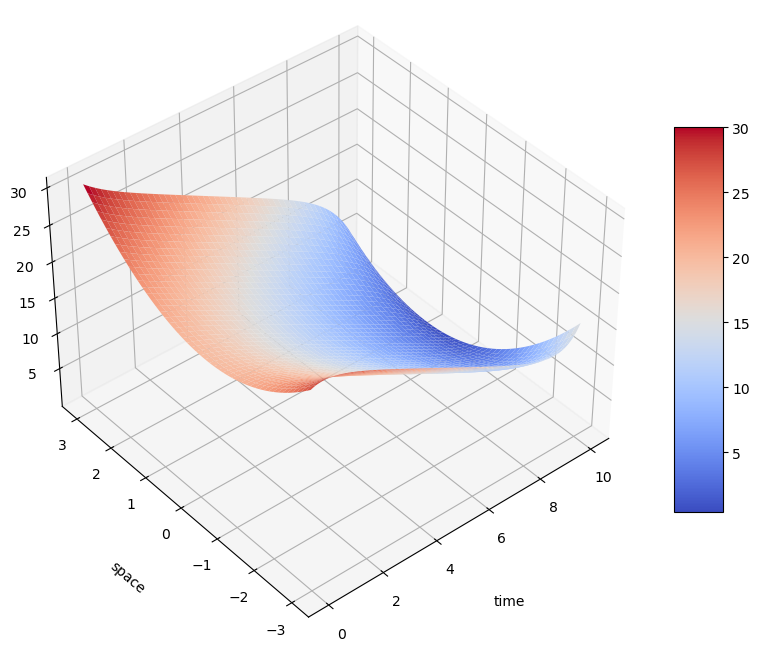}
        \subcaption{Value function $u(t,x)$}
    \end{subfigure}
    \hfill
    \begin{subfigure}[b]{0.45\textwidth}
        \centering
        \includegraphics[width = \textwidth]{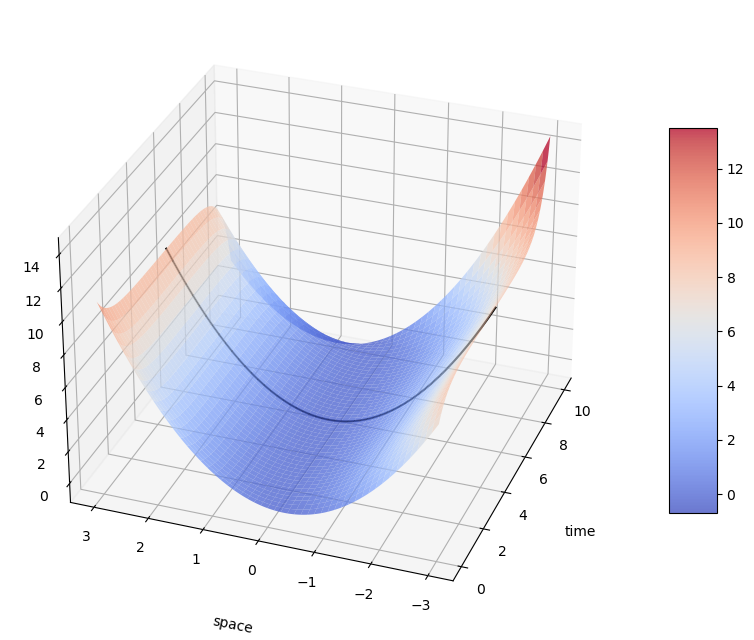}
        \subcaption{Centered value function $u(t,x) - u(t,0)$ and the ergodic value function $\overline{u}(x)$ plotted at $t = T/2$}
    \end{subfigure}
    \bigskip
    \begin{subfigure}[b]{0.45\textwidth}
        \includegraphics[width = \textwidth]{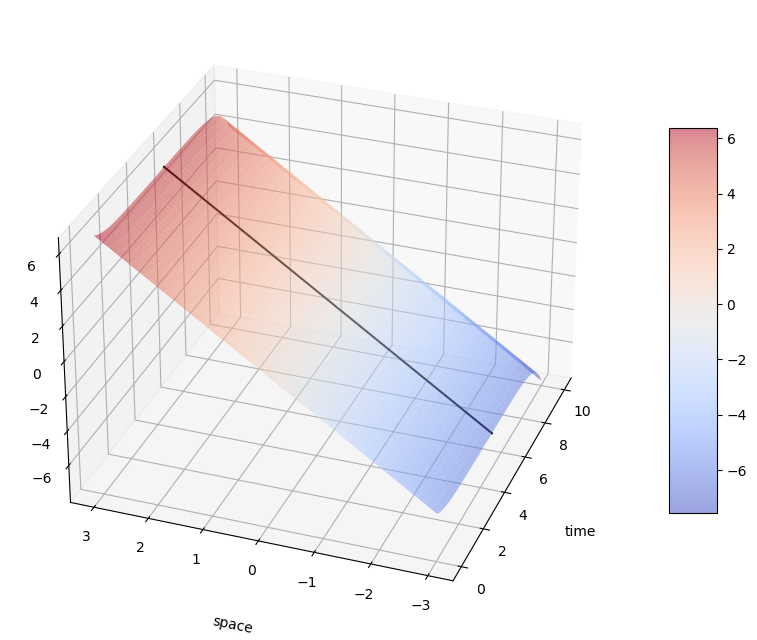}
         \subcaption{Space derivative of the value function $Du(t,x)$ and ergodic value function derivative $D \overline{u}(x)$ plotted at $t = T/2$}
    \end{subfigure}
    \hfill
    \begin{subfigure}[b]{0.45\textwidth}
        \centering
        \includegraphics[width = \textwidth]{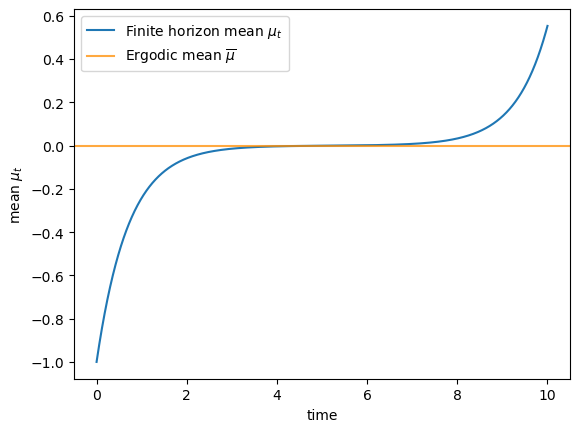}
         \subcaption{Average of the finite horizon $\mu_t$ and the ergodic average $\overline{\mu}$}
    \end{subfigure}
    \caption{Plots of the variation over time of the value function, its centered-at-the-origin version, its space derivative, and of the average $\mu(t,x) = \int x m_t(dx)$ together with the respective turnpikes.}
    \label{fig:turnpike_lqmfg}
\end{figure}

\subsection{Numerical Results}

We implement the baseline DGM method in the form presented in Section \ref{section:3}. We merely restrict the space on which we compute the solution to $D = [-L, L]$ for $L = 3$ and modify the Monte-Carlo estimates accordingly. 

\subsubsection{Turnpike accelerated DGM formulation}

Since we work on the bounded domain $D$, we no longer need the additional ergodic density factor in the turnpike estimates and consider a cost integrating them in the following fashion: 
$$\mathcal{L}^{acc}(u,m) = \mathcal{L}(u,m) + C_u \mathcal{L}^{(TPK-LQ)}_{u/Du}(u,m) +  C_m \mathcal{L}^{(TPK-LQ)}_m(u,m),$$
where
\begin{align}
        & \mathcal{L}^{(TPK-LQ)}_u(u, m)  = \int_{\delta T}^{(1-\delta)T}  \Big\lVert u(t,\cdot) - u(t,0) - \overline{u}\Big\rVert_{L^1(D)}  \cdot  \Big(e^{-\omega t} + e^{-\omega (T-t)}\Big)^{-1}  dt , \\
      & \mathcal{L}^{(TPK-LQ)}_{Du}(u, m)  = \int_{\delta T}^{(1-\delta)T}  \Big\lVert Du(t,\cdot) - D\overline{u}\Big\rVert_{L^1(D)}  \cdot  \Big(e^{-\omega t} + e^{-\omega (T-t)}\Big)^{-1}  dt , \\
    & \mathcal{L}^{(TPK-LQ)}_m(u, m)  = \int_{\delta T}^{(1-\delta)T} \Big\lvert \mu (t) - \overline{\mu} \Big\rvert
    \cdot \Big(e^{-\omega t} + e^{-\omega (T-t)}\Big)^{-1}  dt. 
\end{align}
where $\omega = \sqrt{C - B}$, where $\mu(t) = \int_{-L}^L x m(t, dx)$, and $\overline{\mu} = \int_{-L}^L x \overline{m}(dx)$. 

We apply the DGM described above with the new loss functional $$L^{acc}(u_{\theta},m_{\eta}) = L(u_{\theta},m_{\eta}) + C_u L^{(TPK-LQ)}_{u/Du}(u_{\theta},m_{\eta}) +  C_m L^{(TPK-LQ)}_m(u_{\theta},m_{\eta}),$$
where
\begin{align}
    & L^{(TPK-LQ)}_u(u_{\theta}, m_{\eta})  = \frac{1}{M_t} \sum_{k=1: t_k \in [\delta T, (1-\delta)T]}^{M_t}  \Big\{ \frac{2L}{M_x} \sum_{l=1}^{M_x} \Big\lvert u_{\theta}(t_k,x_l) -  u_{\theta}(t_k,0)- \overline{u}(x_l) \Big\rvert  \Big\} \cdot \Big(e^{-\omega t_k} + e^{-\omega (T-t_k)}\Big)^{-1} , \\
    & L^{(TPK-LQ)}_{Du}(u_{\theta}, m_{\eta})  = \frac{1}{M_t} \sum_{k=1: t_k \in [\delta T, (1-\delta)T]}^{M_t}  \Big\{ \frac{2L}{M_x} \sum_{l=1}^{M_x} \Big\lvert Du_{\theta}(t_k,x_l) - D\overline{u}(x_l) \Big\rvert \Big\} \cdot \Big(e^{-\omega t_k} + e^{-\omega (T-t_k)}\Big)^{-1} , \\
    & L^{(TPK-LQ)}_m(u_{\theta}, m_{\eta})  = \frac{1}{M_t} \sum_{k=1: t_k \in [\delta T, (1-\delta)T]}^{M_t} \Big\lvert  \frac{2L}{M_x} \sum_{l=1}^{M_x} x_l \cdot m_{\eta}(t_k,x_l) - \overline{\mu} \Big\rvert \cdot \Big(e^{-\omega t_k} + e^{-\omega (T-t_k)}\Big)^{-1} . 
\end{align}
where $\overline{\mu} = \frac{2L}{M_x} \sum_{l=1}^{M_x} x_l  \overline{m}(x_l)$. 

\subsubsection{Implementation} 

We use the exact analytical form of the ergodic quadratic-Gaussian solutions as knowledge of the turnpike, instead of having the intermediate step of solving the ergodic Mean Field Game as in Section \ref{section:3}. We therefore have $ \overline{u}(x) = \frac{1}{2}\sqrt{C}x^2 $ and $\overline{\mu} = 0$.  The implementation is similar to the one described in Section \ref{section:3} but with a learning rate schedule going from $10^{-2}$ to $10^{-6}$ in 400,000 SGD iterations.  The hyperparameters are given by Table \ref{tab:hyperparams_lqmfg}. The choice of $C^{(TPK)}_m = 0.1$ is motivated by the competing normalization condition on $m$ and a value of $1$ leads to a degradation of the normalization loss in the intermediate time interval on which we compute the turnpike loss. The solutions are then evaluated on a $2,000\times 2,000$ grid of $[0,T] \times [-L,L]$ for $T =10$ and $L=3$. The parameters of the linear-quadratic model are $Q = 2$, $B = 2$, $\Psi = 1$ and $r=1$. 

\begin{table}[ht]
    \centering
    \begin{tabular}{|c|c|c|c|c|c|c|c|c|c|c|}
    \hline 
        Algorithm & $C^{(HJB)}$ & $C^{(KFP)}$ & $C^{(init)}$ & $C^{(term)}$ & $C^{(norm)}$ & $C^{(TPK-LQ)}_{u}$ & $C^{(TPK-LQ)}_m$ & $\delta$\\
        \hline 
        DGM & 100 & 10 & 100 & 600  & 50 & 0 & 0 & 0 \\
        \hline 
        DGM-TP & 100 & 10 & 100 & 600 & 50 & 1 & 0.1 & 0.2 \\
        \hline 
    \end{tabular}
    \caption{Values of the hyperparameters for the DGM methods for the LQ MFG model.}
    \label{tab:hyperparams_lqmfg}
\end{table}

Given the different turnpike estimates derived in the previous subsection, we implement two turnpike-accelerated versions of the DGM algorithm with  on one hand, the turnpike estimates involving $u$ and $\mu$, and on the other hand the ones involving $Du$ and $\mu$. These are respectively denoted by DGM-TP $u$ and DGM-TP $Du$. 

\subsubsection{Results and comparisons}

We present in Figure \ref{fig:3D_value_function_comparison} the plots of the value functions obtained by the baseline DGM method and the turnpike-accelerated versions in comparison to the semi-explicit analytical solution. The left plot suggests that simply following the generic method of monitoring the convergence of the loss terms to 0 is not enough to provide a good approximation of the solution with the chosen set of hyperparameters. With the same values, the addition of the turnpike estimates helps the neural network approximating the value function to learn the correct shape and this results in a smaller relative error as depicted in Figure \ref{fig:rel_err_lqmfg}.

\begin{figure}[hbt!]
    \centering
    \begin{subfigure}{0.45\textwidth}
        \includegraphics[width = \textwidth]{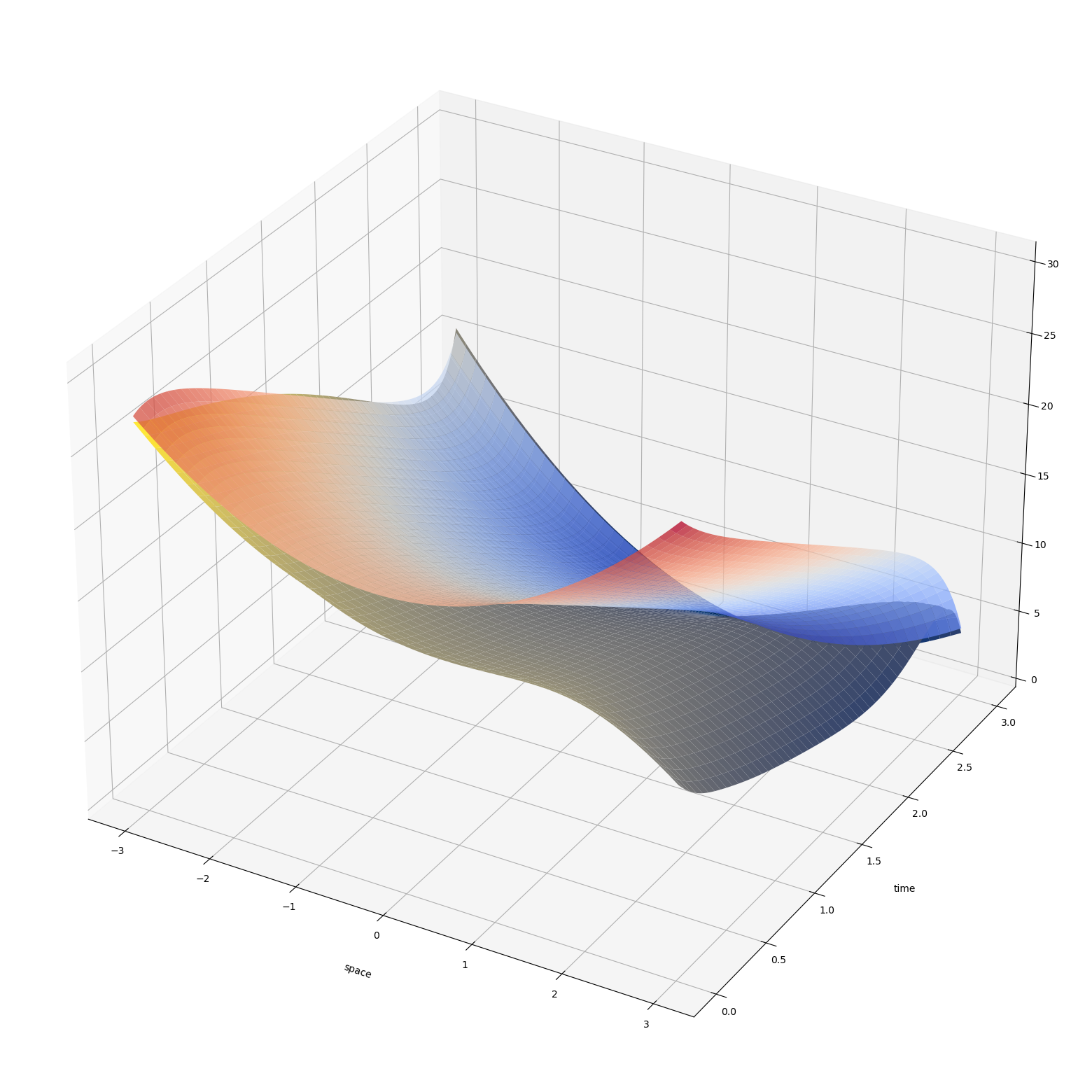}
    \end{subfigure}
    ~ 
    \begin{subfigure}{0.45\textwidth}
        \includegraphics[width = \textwidth]{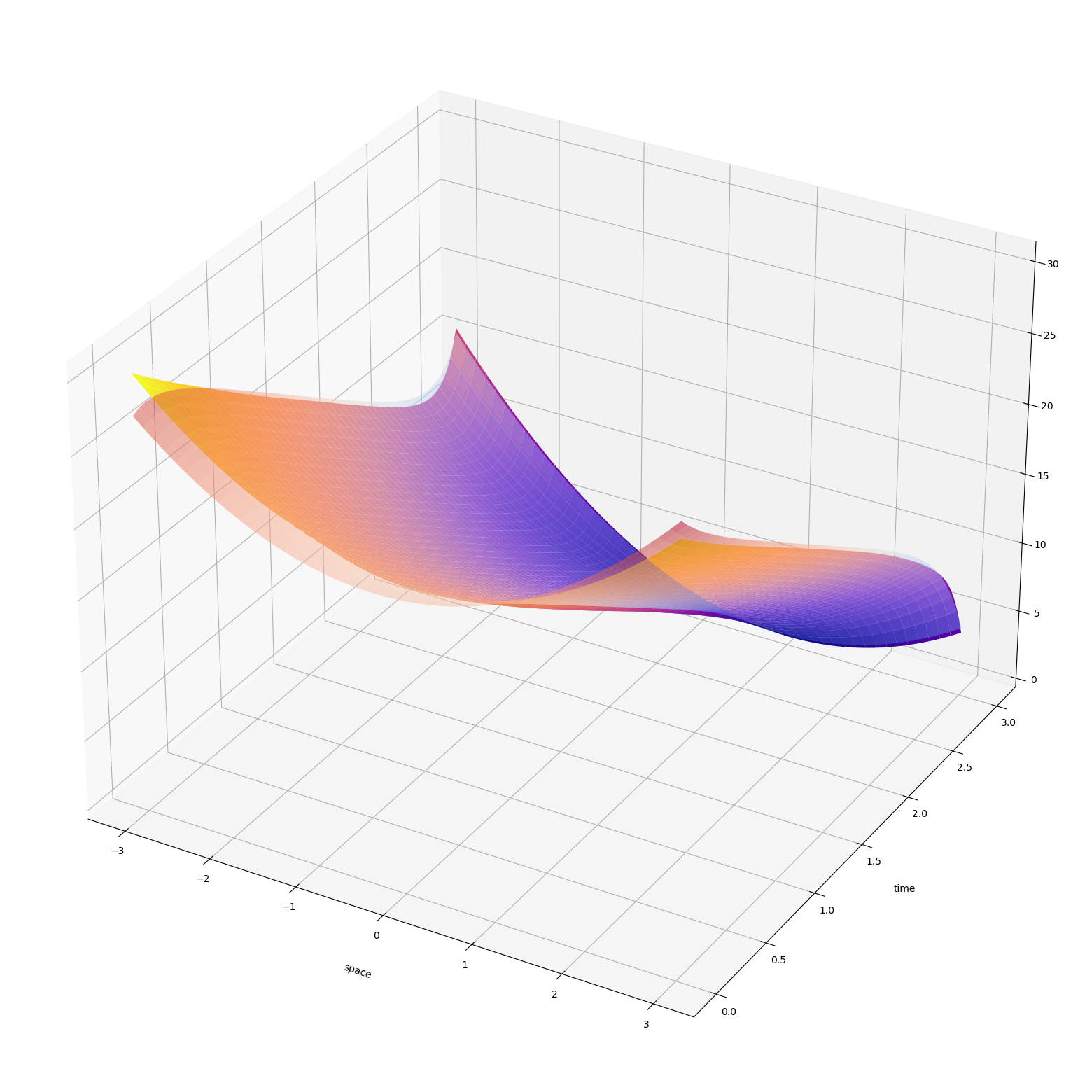}
    \end{subfigure}
    \bigskip
    \begin{subfigure}{0.7\textwidth}
        \includegraphics[width = \textwidth]{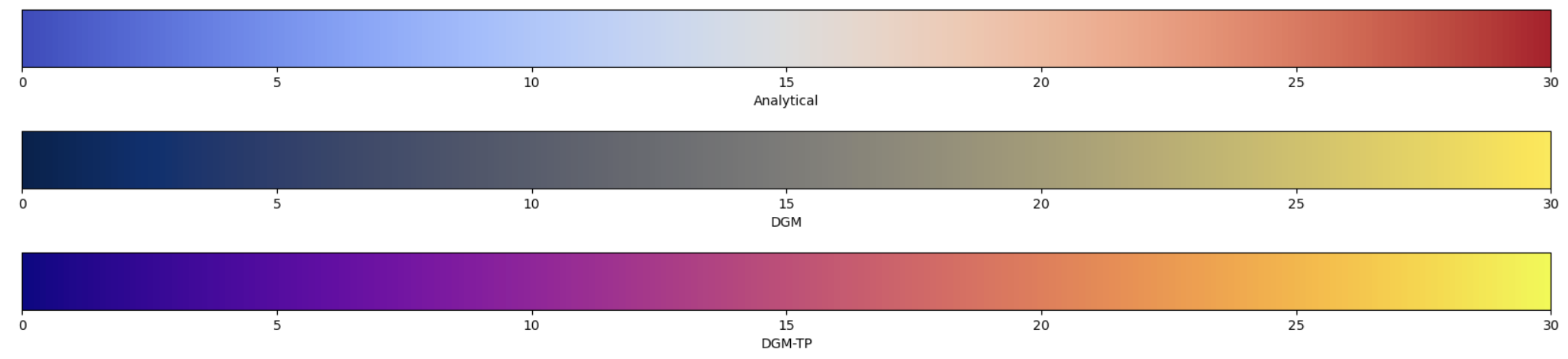}
    \end{subfigure}
    \caption{Plot of the value functions obtained through the analytical solution and the two DGM methods with respect to time in $[0,T]$ for $T=10$ and space in $[-3,3]$. The left pane shows the analytical solution against the DGM one, while the right pane shows the analytical solution against the DGM-TP $u$ one.}
    \label{fig:3D_value_function_comparison}
\end{figure}

\begin{figure}[hbt!]
    \centering
    \begin{subfigure}{0.45\textwidth}
        \includegraphics[width = \textwidth]{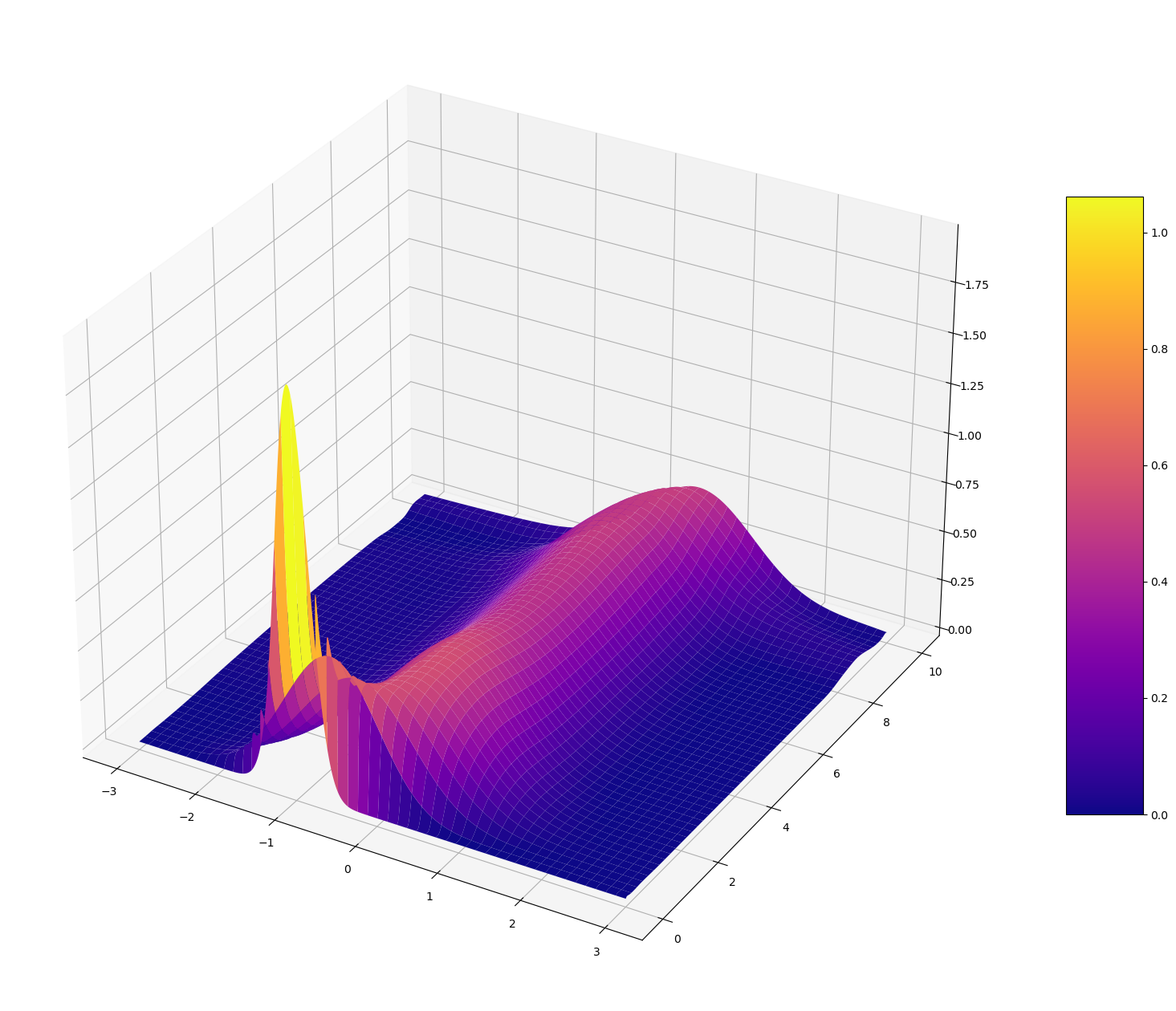}
    \end{subfigure}
    ~ 
    \begin{subfigure}{0.45\textwidth}
        \includegraphics[width = \textwidth]{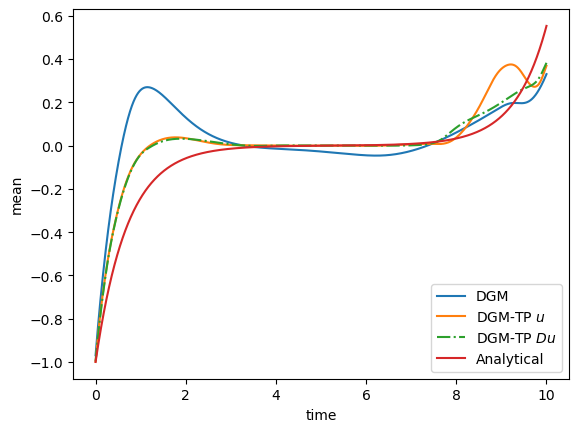}
    \end{subfigure}
    \caption{Evolution of the density function $m(t,x)$ from the DGM-TP $Du$ method (on the left) and of the average $\mu(t) = \int x m(t,dx)$ (on the right).}
    \label{fig:mean-lqmfg}
   \end{figure} 

The left pane of Figure \ref{fig:mean-lqmfg} gives the evolution of the density function for the DGM-TP-$u$ algorithm. We observe that the bell shape of the distribution is coherent with the linear-quadratic nature of the model with a Gaussian initial distribution. The shape is essentially the same for the baseline DGM and DGM-TP-$Du$ algorithms, up to the value of the mean. On the right pane, the evolution over time  of the average of the distribution $\mu = \int  x \mu(t,dx)$ is provided. The turnpike-accelerated methods lead to a process that is closer to the analytical solution with a middle phase around the steady-state that is slightly longer than the baseline DGM algorithm. 

We now compare the performance of the turnpike-accelerated DGM algorithm with the inclusion of the turnpike estimates involving on one hand $u$ and $\mu$ and on the other hand $Du$ and $\mu$. The relative errors are provided in Table \ref{tab:relative_error_lqmfg} and their graphs are represented in Figure \ref{fig:rel_err_lqmfg} : we notice that the turnpike-accelerated methods actually does improve the approximation on both the value function and on the distribution mean. There is however no significant discrepancy between both turnpike-accelerated versions: indeed, for the same penalty coefficient $C^{(TPK)}_{u}$, the slight variation comes from the actual value of the turnpike loss on $Du$ being smaller than that on $u$ and it therefore leads to giving a bit more importance to the distribution mean turnpike loss term in the first case than in the second. 

\begin{table}[hbt!]
    \centering
    \begin{tabular}{|c|c|c|c|c|}
    \hline 
        Included turnpike estimates & $C^{(TPK)}_{u/Du}$ & Relative Error on $u$ & $C^{(TPK)}_m$ & Relative Error on $\mu$  \\
        \hline 
        None & 0 & $1.113 \times 10^{-3}$ & 0 & $5.618 \times 10^{-2}$  \\
        \hline
       $u(t,x) - u(t,0)$ and $\mu(t,x)$ & 1 & $1.019 \times 10^{-4}$ & 0.1 & $3.042 \times 10^{-2}$\\
        \hline 
        $Du(t,x)$ and $\mu(t,x)$ & 1 & $1.334 \times 10^{-4}$ & 0.1 & $2.173 \times 10^{-2}$\\
        \hline 
    \end{tabular}
    \caption{Relative $L^2$-errors on the solutions for inclusion of different turnpike estimates into the loss function from the neural networks obtained after the last iteration $i = 400,000$. }
    \label{tab:relative_error_lqmfg}
\end{table}

\begin{figure}[hbt!]
    \centering
    \begin{subfigure}{0.45\textwidth}
        \includegraphics[width = \textwidth]{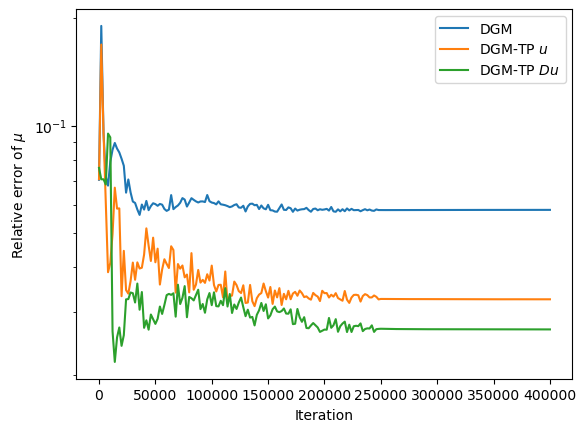}
    \end{subfigure}
    ~ 
    \begin{subfigure}{0.45\textwidth}
        \includegraphics[width = \textwidth]{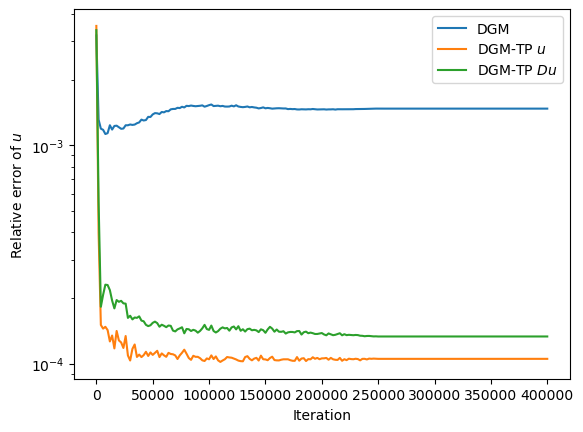}
    \end{subfigure}
    \caption{Plot of the relative errors for the distribution mean $\mu(t) = \int_{Q} x m(t,dx)$ (left) and the value function (right).}
    \label{fig:rel_err_lqmfg}
   \end{figure} 

\newpage

\section{\textbf{Conclusions, Limitations and Possible Extensions} }
\label{section:5}

The thrust of the paper is to propose an original algorithm to compute numerically Mean Field Game (MFG) equilibria over long time horizons for a specific class of models. Time-dependent equilibria are notoriously difficult to approximate when the time horizon is large, and even the best known numerical methods can experience difficulties converging, or take too much time when they do converge. Our strategy to compute time-dependent equilibria over long time horizons is to assume that the coefficients of the model are time-independent, and that the solution of the corresponding stationary (i.e. time-independent) ergodic version of the model can be computed efficiently. According to the turnpike property, we shall compute the solution to the time dependent problem only near the time origin and the terminal time, and use the ergodic (stationary) solution for the bulk of the remaining time. In layman terms, we turn the turnpike phenomenon into a practical numerical recipe to compute the equilibrium over long time intervals. 

We propose to modify an existing variational method to incorporate the knowledge of the turnpike property for some Mean Field Game models. We implement a version of the Deep Garlekin Method (DGM) to compute the distribution function and the value function of the standard coupled MFG PDE system. In our formulation of the algorithm, both functions are penalized if they fail to satisfy their own expected \emph{turnpike estimates}. We demonstrate the rationale for our approach by computing numerically the solutions of models for which the turnpike phenomenon was argued theoretically, and highlight the core elements of our proposal.

This first class of MFG models has local interactions. We quantify numerically the superiority of our \emph{turnpike-accelerated DGM} over the baseline DGM and an implementation of the standard Finite Difference Method (FDM) used as baseline since no explicit solution is known for the model in question.
The second class of models on which we test the performance of our numerical algorithm comprise Linear Quadratic (LQ) MFGs with non-local interactions. While those models have been extensively studied, mostly because \emph{semi-explicit solutions} are available for finite time-horizons (even if it is at the cost of the solution of unstable Riccati equations and ODE systems involving the statistical moments of the mean field distribution), they have not been studied for the turnpike property. We derive rigorously the turnpike estimates holding for this class of model, and we proceed to show the superiority of the \emph{turnpike-accelerated DGM} over the baseline DGM method and other standard algorithms.

Given the versatility of the DGM algorithm when it comes to tackling high-dimensional problems, the question of the scalability of the suggested version to higher dimensions is of particular interest. The experiments conducted on Mean Field Games suggest to revisit some PDE applications, since turnpike properties are also available for stochastic control problems. Moreover, the turnpike-accelerated method suggests integrating other types of properties (if available) in the Deep Galerkin Method loss to potentially enhance the performance of the numerical scheme. Finally, the newly derived turnpike estimates for the LQ models open the possibility of exploring the application of transfer learning for the training of Mean Field Games with the turnpike property. Transfer learning refers to the warm-start of the weights of the neural networks for a specific task, using the solutions trained for a related but usually simpler problem.  The idea would be to first train the neural networks to solve the ergodic MFG PDE system and leverage this knowledge to accelerate the training of the decoupling field of the Forward-Backward Stochastic Differential Equation (FBSDE) of the finite horizon problem obtained through the Pontryagin Stochastic Maximum Principle. This would provide an initialization of the Shooting Method \textit{à la Sannikov} using the PDE approach of the ergodic MFG system.

\printbibliography

\begin{appendix}

\section{\textbf{Finite Difference Discretization} }
\label{appendix:A}
For the sake of completeness, we describe the finite difference method used to compute the finite horizon solution in the local coupling model from Section\ref{section:3}. 

\subsection{Notations}

Let $N_T$ and $N_h$ denote two positive integers for a discretization on $[0,T]$ and $[0,1]$ with respectively $(N_T+1)$ and $(N_h+1)$ points in time and space. Let $\Delta t = T/N_T$ and $h = 1/N_h$ and $t_n = n \times \Delta t$ and $x_i = i \times h$ for $n \in \{0, \dots, N_T\}$ and $ i \in \{0, \dots, N_h\}$. 
We approximate $u$ and $m$ by the vectors $U$ and $M$ such that $u(t_n, x_i) \simeq U^n_i$ and $m(t_n, x_i) \simeq M^n_i$. Let $F$ be a generic matrix in $\mathcal{M}_{N_T+1 \times N_h +1}$. We consider the following finite difference operators: 
\begin{align*}
    & F_{N_h + 1}  \equiv F_0, & \text{(Convention)} \\
    &(D_tF)^n = \frac{1}{\Delta t} (F^{n+1} - F^n), & n \in \{0, \dots, N_T-1\}, \\
    & (DF)_i = \frac{1}{h} (F_{i+1} - F_i), & i \in \{0, \dots, N_h\}, \\
      & (\Delta_h F)_i = -\frac{1}{h^2} (2 F_i - F_{i+1} - F_{i-1}), & i \in \{0, \dots, N_h\},  \\  
     & [\nabla_h F]_i = \Big( (DF)_i ,  (DF)_{i-1} \Big)^{\dagger}, & i \in \{0, \dots, N_h\}.
\end{align*}

\begin{algorithm}
\caption{Fixed point iterations of the FDM algorithm}\label{alg:cap}
\begin{algorithmic}
\Require Initial guess $(\tilde{M}, \tilde{V})$; damping $\delta$, number of iterations $K$
\Ensure Approximation of $(\hat{M}, \hat{V})$ solving the finite difference system. \\
\textbf{Initialize}
\State $M^{(0)}  \gets \tilde{M}$
\State $V^{(0)} \gets \tilde{V}$
\For{$k = 0, 1, \dots, K-1$} \\
    \State Let $V^{(k+1)}$ be the solution of: 
    \State \begin{align*}
        \begin{cases}
            & - (D_t V_i)^n - \kappa (\Delta_h V^{n})_i + \tilde{H}(x_i, [ \nabla_h V^{n}]_i) = F_h[M^{(k), n+1}]_i, \, 0 \leq n \leq N_T-1,  \\
& V^{N_{T}}_i = \phi(x_i).
        \end{cases}
    \end{align*}  \Comment{Since the HJB equation is non-linear, we use a Newton method to linearize its discrete version.}
\State Let $M^{(k+1)}$ be the solution of: 
   \begin{align*}
        \begin{cases}
            & - (D_t M_i)^n - \kappa (\Delta_h M^{n})_i - \mathcal{T}_i (V^{(k+1), n+1}, M^{n}) = 0 ,\, 0 \leq n \leq N_T-1,  \\
& M^{N_{T_1}}_i = \psi(x_i). 
        \end{cases}
    \end{align*}  
    Let $\tilde{M}^{(k+1)} = \delta(k) \tilde{M}^{(k)} + (1-\delta(k)) M^{(k+1)}$. 
\EndFor \\
\Return $(M^{(K)}, V^{(K)})$
\end{algorithmic}
\end{algorithm}

\subsection{Discretization on $[0, T]$}

We respectively denote the initial and terminal conditions as $\phi$ and $\chi$. Then the PDE system becomes: 

 \begin{subequations}
    \begin{empheq}[left={\empheqlbrace}]{align}
& - \partial_t v - \kappa \Delta u + H(x, Du) =  F(x, m) &\text{ in } (0, T) \times \mathbb{T}^d, \\
& \partial_t \mu  - \kappa \Delta \mu - \operatorname{div}(\mu H_p(x, Dv)) = 0 &\text{ in } (0, T) \times \mathbb{T}^d,  \\
&v(T,x) = \phi(x), \, \mu(0,x) = \chi(x), \int m(x)dx = 1.
    \end{empheq}
    \end{subequations}

\subsubsection{Discrete HJB Equation}
 \begin{subequations}
    \begin{empheq}[left={(dHJB) \empheqlbrace}]{align*}
& - (D_t V_i)^n - \kappa (\Delta_h V^{n})_i + \tilde{H}(x_i, [ \nabla_h V^{n}]_i) = F_h[M^{n+1}]_i, & 0 \leq i \leq N_h, 0 \leq n \leq N_T-1,  \\
& V^{T}_i = \phi(x_i), & \text{initialization} \\
& V^{n}_0 = V^{n}_{N_h} , & \text{periodic conditions}
    \end{empheq}
    \end{subequations}

Since $H(x,p) = \frac{1}{2} \lvert p \rvert^2$, we consider  $
    \tilde{H} (x,p_1, p_2) = \frac{1}{2} \lvert P_K(p_1, p_2) \rvert^2 $
where $P_K$ is the projection on $ K = \mathbb{R}_{-} \times \mathbb{R}_{-}$. 

\subsubsection{Discrete KFP Equation}
 \begin{subequations}
    \begin{empheq}[left={(dKFP):= \empheqlbrace}]{align*}
& - (D_t M_i)^n - \kappa (\Delta_h M^{n+1})_i - \mathcal{T}_i (V^{n+1}, M^{n+1}) = 0 , & 0 \leq i \leq N_h, 0 \leq n \leq N_T-1,  \\
& M^{0}_i = \chi(x_i), & \text{initialization} \\
& M^{n+1}_i \geq 0 , & \text{non-negativity} \\ 
& \frac{1}{h} \sum_{i=0}^{N_h} M^{n}_i = 1 , &0 \leq n \leq N_T-1, \, \text{normalization} \\ 
& M^{n}_0 = M^{n}_{N_h} , & \text{periodicity}
\end{empheq}
\end{subequations}
where 
\begin{align*}
    \mathcal{T}_i (U, M)= & \frac{1}{h} \Big( M_i \tilde{H}_{p_1}(x_i, [\nabla_h U]_i) -  M_{i-1} \tilde{H}_{p_1}(x_{i-1}, [\nabla_h U]_{i-1}) \\
    & \qquad + M_{i+1} \tilde{H}_{p_2}(x_{i+1}, [\nabla_h U]_{i+1}) -  M_i \tilde{H}_{p_2}(x_i, [\nabla_h U]_i)\Big)
\end{align*}

\section{\textbf{DGM for an Ergodic MFG}} 
\label{appendix:B}

The PDE system is reformulated as an optimization problem with an objective function with the triple $(\lambda, u, m)$ being the control. The loss function actually corresponds to the sum of the PDE residuals and some additional boundary and normalization conditions. We approximate the functions $u(x)$ and $m(x)$ as two deep neural networks $u_{\theta} := \phi(\cdot;\theta)$ and  $m_{\eta} := \chi(\cdot;\eta)$ where $\theta \in \mathbb{R}^K$ and $\eta \in \mathbb{R}^K$ are the respective neural networks parameters to be learnt. We also consider a constant neural network $\lambda_{\zeta} = \zeta$ with $\zeta \in \mathbb{R}$ for the ergodic constant.  We denote for $x = (x_1, x_2, \dots, x_d) \in \mathbb{R}^d$ and $ z \in \mathbb{R}$, $(x^{-k}, z) := (x_1, \dots, x_{k-1}, z, x_{k+1}, \dots, x_d)$. We then consider the following loss function : 
\begin{align*}
    \mathcal{L}(\lambda, u, m) = & \, \, C^{(HJB)} \mathcal{L}^{(HJB)}(\lambda, u, m) +  C^{(KFP)}  \mathcal{L}^{(KFP)} (\lambda, u, m) \\ 
    & + C^{(norm)} \mathcal{L}^{(norm)}(\lambda, u, m) + C^{(period)} 
 \mathcal{L}^{(period)}(\lambda, u, m) 
\end{align*}
where, using the notation $F[m]=  F(x , m(x))$,  
\begin{align*}
    & \mathcal{L}^{(HJB)}(\lambda, u, m)  = \Big\lVert \lambda - \frac{1}{2} \Delta u + \frac{1}{2} \lvert \nabla u \rvert^2 - F[m] \Big\rVert_{L^2(\mathbb{T}^d)}^2, \\
    & \mathcal{L}^{(KFP)}(\lambda, u, m)  = \Big\lVert- \frac{1}{2} \Delta m - \operatorname{div} (m \nabla u) \Big\rVert_{L^2(\mathbb{T}^d)}^2, \\
    &  \mathcal{L}^{(norm)}(\lambda, u, m) = \Big \lvert \int_{\mathbb{T}^d} u(x) dx \Big\rvert + \Big\lvert \int_{\mathbb{T}^d} m(t,x) dx - 1 \Big\rvert, \\
    & \mathcal{L}^{(period)}(\lambda, u, m) =  \sum_{k=1}^d \lvert u((x^{-k}, 0)) - u((x^{-k}, 1) ) \rvert^2 + \sum_{k=1}^d  \lvert m((x^{-k}, 0)) - m((x^{-k}, 1) ) \rvert^2 . 
\end{align*}
and $ C^{(HJB)}$, $C^{(KFP)}$, $C^{(norm)}$, and $C^{(period)}$ are positive coefficients controlling the importance given to each component. The objective is to find the right set of parameters $\theta$, $\eta$, so that the neural networks $\phi$, $\chi$, $\lambda_{\omega}$ minimize the loss functional $\mathcal{L}(\lambda, u , m)$. To that end, we use a Monte Carlo method to approximate the $L^2$ norms and compute the above residual terms as: 

\begin{align*}
    & L^{(HJB)}(\lambda_{\zeta}, u_{\theta}, m_{\eta})  = \frac{1}{M} \sum_{k=1}^{M}  \Big\lvert \lambda_{\zeta} - \frac{1}{2} \Delta u(x_k) + \frac{1}{2} \lvert \nabla u(x_k) \rvert^2 - F[m][x_k] \Big\rvert^2, \\
    & L^{(KFP)}(\lambda_{\zeta}, u_{\theta}, m_{\eta})  = \frac{1}{M} \sum_{k=1}^{M}  \Big\lvert- \frac{1}{2} \Delta m(x_k) - \operatorname{div} \Big(m(x_k) \nabla u(x_k) \Big)  \Big\rvert^2, \\
    & L^{(norm)}(\lambda_{\zeta}, u_{\theta}, m_{\eta})  = \Big \lvert \frac{1}{M} \sum_{k=1}^M u(x_k) \Big\rvert + \Big\lvert \frac{1}{M} \sum_{k=1}^M  m(x_k) - 1 \Big\rvert, \\
    & L^{(period)}(\lambda_{\zeta}, u_{\theta}, m_{\eta})  =  \frac{1}{M} \sum_{l=1}^M \sum_{k=1}^d \lvert u((x_{l}^{-k}, 0)) - u((x_{l}^{-k}, 1) ) \rvert^2 + \frac{1}{M} \sum_{l=1}^M \sum_{k=1}^d  \lvert m((x_l^{-k}, 0)) - m((x_l^{-k}, 1) ) \rvert^2. 
\end{align*}

\begin{figure}[hbt!]
    \centering
    \begin{subfigure}[t]{0.45\textwidth}
        \centering
        \includegraphics[width = 6cm]{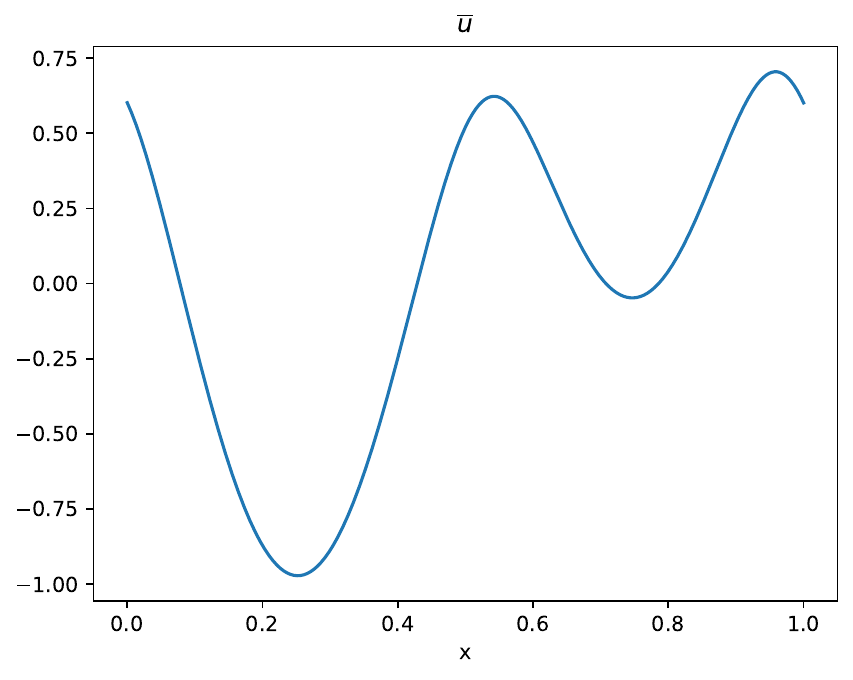}
        \caption{Plot of the normalized value function $\overline{u}$ with $\int_{\mathbb{T}} \overline{u}(x) dx = 0$}
    \end{subfigure} 
    ~
    \begin{subfigure}[t]{0.45\textwidth}
        \centering
          \includegraphics[width = 6cm]{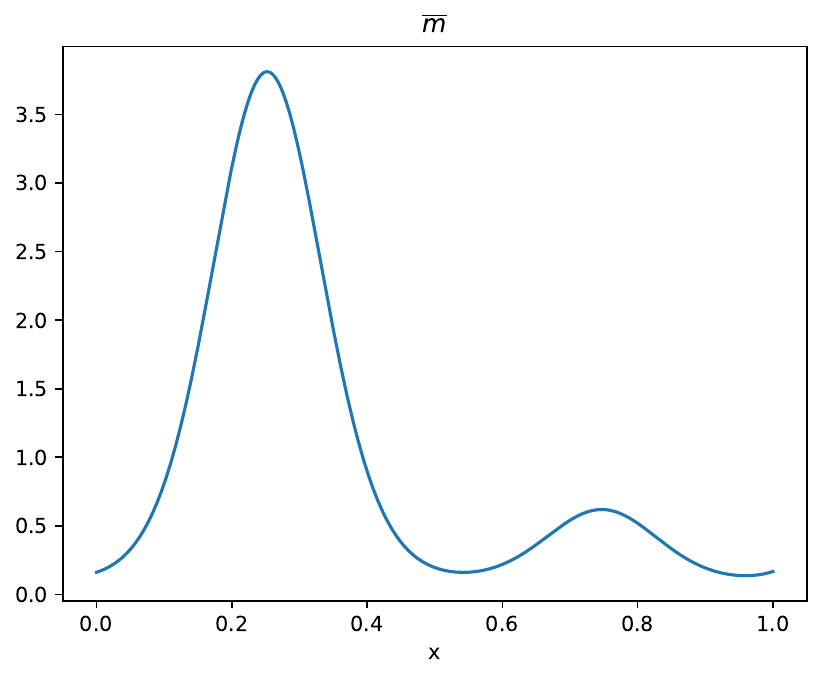}
    \caption{Plot of the density function $\overline{m}$}
    \end{subfigure}
    \caption{Stationary solution of the ergodic MFG}
\end{figure}

\end{appendix}

\end{document}